\newtheorem{theorem}{Theorem}[section]
\newtheorem{lemma}[theorem]{Lemma}
\newtheorem{proposition}[theorem]{Proposition}
\newtheorem{remark}[theorem]{Remark}
\date{}
\begin{document}

\title{On the Critical Behavior of Continuous Homopolymers}
\author{Leonid Koralov, Zsolt Pajor-Gyulai}
\maketitle

\vskip1cm
\begin{abstract}
The aim of this paper is to investigate the distribution of a
continuous homopolymer in the presence of an attractive finitely
supported potential. The most intricate behavior can be observed
if we simultaneously vary two parameters: the temperature, which
approaches the critical value, and the length of the polymer,
which tends to infinity. As the main result, we identify the
distributions that appear in the limit (after a diffusive scaling
of the original polymer measures) and depend on the relation
between the two parameters.

\end{abstract}

\section{Introduction}

We consider the following model of long homogeneous polymer chains
in an attractive potential field. Let the space
$C([0,T],\mathbb{R}^d)$  be equipped with the Wiener measure
$\mathrm{P}_{0,T}$ and suppose that we have a smooth, nonnegative,
not identically equal to zero, compactly supported potential $v\in
C_0^{\infty}(\mathbb{R}^d)$ and a coupling constant $\beta\geq 0$
(inverse temperature), which regulates the strength of the
attraction. The elements $\omega(.)$ of the space are interpreted
as realizations of a continuous homopolymer on $[0,T]$ and are
distributed according to the Gibbs measure $\mathrm{P}_{\beta,T}$
with
\[
\frac{d\mathrm{P}_{\beta,T}}{d\mathrm{P}_{0,T}}(\omega)=
\frac{e^{\beta\int_0^Tv(\omega(t))dt}}{Z_{\beta,T}},\qquad
\omega\in C([0,T],\mathbb{R^d}),
\]
where
\[
Z_{\beta,T}=\mathrm{E}_{0,T}e^{\beta\int_0^Tv(\omega(t))dt}
\]
is the partition function.

Thus the polymer measure we consider is of a ``mean field" type,
where the polymer chain interacts with the external attractive
potential (as in \cite{LGK}). While the potential is assumed to be
constant in time in our paper, many interesting results have been
obtained for disordered media, i.e., time-dependent random
potentials (existence of phase transitions, dependence of the
growth rate of the partition function on the temperature, etc.,
see \cite{AS}, \cite{Gia}, for example). We suspect, however, that
the disordered models are not amenable to such a detailed analysis
of the phase transition phenomena as found in the current paper.

It is established - using the Feynman-Kac formula - that under the
measures $ \mathrm{P}_{\beta,T}$, the processes $\{\omega(t),
0\leq t\leq T\}$ are time-inhomogeneous and Markovian and that
their transition densities can be expressed in terms of the
fundamental solution $p_{\beta}$ of the parabolic equation
\begin{equation} \label{frst}
\frac{\partial u}{\partial t}=H_{\beta}u, ~~~{\rm where}~~~~
H_{\beta}=\frac{1}{2}\Delta  +\beta v: L^2(\mathbb{R}^d)\to
L^2(\mathbb{R}^d).
\end{equation}
More precisely, the finite-dimensional distributions are
\begin{align} \label{fdd}
\mathrm{P}_{\beta,T}(\omega(t_1)\in A_1,...,\omega(t_n)\in A_n)=\\
\nonumber =\frac{1}{Z_{\beta,T}}\int_{A_1}...\int_{A_n}&
\int_{\mathbb{R}^d}p_{\beta}(t_1,0,x_1)...p_{\beta}(T-t_n,x_n,y)
dydx_n...dx_1
\end{align}
for $0\leq t_1\leq...\leq t_n\leq T$ and $A_1,...,A_n\in \mathcal{B}(\mathbb{R}^3)$.

It is worth noting that
\[
Z_{\beta,t}=\int_{\mathbb{R}^3}p_{\beta}(t,0,y)dy.
\]

It was shown that in $d\geq 3$, at a ceratin (critical) value of
the coupling constant, there occurs a phase transition occurs
between a densely packed globular state and an extended phase of
the polymer. Namely, for $\beta > \beta_{\rm cr}$, a typical
polymer realization is at a distance of order one from the origin
as $T\to\infty$. On the other hand, for $\beta \leq \beta_{\rm
cr}$, the realizations need to be scaled by the factor $\sqrt{T}$
in the spatial variables in order to get a non-trivial limit (as
in Theorem \ref{the1a} below). The critical value of the coupling
constant is
\[
\beta_{\rm cr}=\sup\{\beta>0|\sup\sigma(H_{\beta})=0\},
\]
where  $\sigma(H_{\beta})$ is the spectrum of the operator  $ H_{\beta}$.

Here's the precise statement of the result proved in \cite{CKMV09} that
is relevant to this paper.

\begin{theorem} \label{the1a}
For $a>0$, let $f_a:C([0,T],\mathbb{R}^3)\to
C([0,aT],\mathbb{R}^3)$ be the mapping defined by
$(f_a\omega)(t)=\sqrt{a}\omega(t/a)$, and let $f_a\mathrm{P}$ be
the push-forward of a measure $\mathrm{P}$ by this mapping. There
is a measure $ \mathrm{Q}_0$ on $C([0,1],\mathbb{R}^3)$ that
corresponds to a certain time-inhomogeneous Markov process
starting at the origin such that
\[
f_{T^{-1}}{\mathrm{P}}_{\beta_{\rm cr},T}\Rightarrow {\mathrm{Q}}_0~~~{\rm as}~~ T\to\infty.
\]
\end{theorem}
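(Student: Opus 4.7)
The plan is to deduce the weak convergence on $C([0,1],\mathbb{R}^3)$ from two standard ingredients: convergence of finite-dimensional distributions, and tightness of the family $\{f_{T^{-1}}\mathrm{P}_{\beta_{\rm cr},T}\}_{T>0}$. For the finite-dimensional convergence, I would substitute $t_i = Ts_i$ and $x_i=\sqrt{T}z_i$ into the explicit formula (\ref{fdd}) to express the finite-dimensional distributions of $f_{T^{-1}}\mathrm{P}_{\beta_{\rm cr},T}$ in terms of the rescaled heat kernel values $T^{3/2}p_{\beta_{\rm cr}}(Ts,\sqrt{T}z,\sqrt{T}z')$ and the normalizing partition function $Z_{\beta_{\rm cr},T}$. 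The theorem then reduces to sharp large-time asymptotics for $p_{\beta_{\rm cr}}$ in the diffusive scaling.

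The heart of the argument is the heat kernel asymptotics at the critical coupling. Since $\sup\sigma(H_{\beta_{\rm cr}})=0$, zero sits at the edge of the spectrum; for compactly supported $v$ in $d=3$ this typically reflects a zero-energy resonance with an associated function $\psi$ decaying like $|x|^{-1}$ at infinity. Using a resolvent expansion of $(H_{\beta_{\rm cr}}-\lambda)^{-1}$ near $\lambda=0$ in the spirit of Jensen--Kato and Murata, combined with an inverse Laplace transform, I would derive a uniform expansion of $p_{\beta_{\rm cr}}(t,x,y)$ for large $t$ in three regimes: (i) both $x,y$ in a compact set (inside $\mathrm{supp}(v)$), (ii) both at the diffusive scale $\sqrt{t}$, and (iii) the mixed case. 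Heuristically the potential acts as an effective localized source at the origin, so in the far-field the leading term combines a free Gaussian propagator with a correction reflecting the resonance through $\psi(0)$ and the boundary data of $\psi$, while in the compact region the behavior factors through $\psi$.

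Assembling these asymptotics inside (\ref{fdd}), I would identify the limiting transition density $q(s,z;s',z')$ on $(0,1]$ and verify that it satisfies the Chapman--Kolmogorov relation, so the limit is the law of a time-inhomogeneous Markov process; the inhomogeneity arises naturally because the fixed polymer length corresponds, in the rescaled picture, to a free endpoint condition at $s=1$, making the future and past of a fixed time $s$ asymmetric. For tightness I would apply the Kolmogorov--Chentsov criterion, establishing moment bounds of the form $\mathrm{E}_{f_{T^{-1}}\mathrm{P}_{\beta_{\rm cr},T}}|\omega(s)-\omega(s')|^{4}\leq C|s-s'|^2$ uniformly in $T$. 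Such bounds follow from the Markov structure of the rescaled process together with the heat kernel upper bounds produced by the same resolvent analysis.

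The principal technical obstacle is the uniform heat kernel asymptotics at the spectral edge: matching the compact and diffusive regimes with controlled error terms, and carefully tracking whether zero is a true $L^2$-eigenvalue or merely a resonance of $H_{\beta_{\rm cr}}$, requires a delicate analysis of Schr\"odinger operators with compactly supported potentials and precise control of the small-energy expansion of the resolvent. Once these asymptotics are in hand, the identification of the limit $\mathrm{Q}_0$ and the tightness estimates should follow relatively routinely.
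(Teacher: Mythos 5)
Your outline is essentially the approach used in \cite{CKMV09} (where this theorem is proved; the present paper only cites it) and is also the template the present paper follows for its generalization, Theorem~\ref{main_result}: inverse Laplace transform of the resolvent, low-energy expansion near the spectral edge driven by the zero-energy resonance $\psi\sim|x|^{-1}$, heat-kernel asymptotics in the compact, diffusive, and mixed regimes (cf.\ Propositions~\ref{cauchy1}--\ref{partition_sum}), convergence of finite-dimensional distributions via Lemma~\ref{fin_dim_distr}, and a separate tightness argument. The only cosmetic difference is that you phrase the resolvent expansion in the Jensen--Kato/Murata language, whereas the paper works with the explicit factorization $R_{\beta}(\lambda)=R_0(\lambda)(I+\beta A(\lambda))^{-1}$ and expands $(I+\beta A(\lambda))^{-1}$ directly; these are equivalent routes to the same small-$\lambda$ behavior.
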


\begin{remark}
We use the subscript $0$ in the notation for the limiting measure
since we will introduce a whole class of measures, and
${\mathrm{Q}}_0$ will be just a particular member of this class.
\end{remark}

\begin{remark}
For $\beta < \beta_{\rm cr}$, the measures
$f_{T^{-1}}{\mathrm{P}}_{\beta,T}$ converge to the Wiener measure
on $C([0,1],\mathbb{R}^3)$. Note, however, that this convergence
and the convergence in Theorem~\ref{the1a} take place for
fixed values of $\beta$ that don't scale with $T$.
\end{remark}

In this paper we'll allow $\beta = \beta(T)$ to change in such a
way that $(\beta(T)-\beta_{\rm cr})\sqrt{T}\to\chi$. We'll show
that there are measures $ \mathrm{Q}_{\gamma}$  and a linear
mapping $\gamma(\chi) = c \chi$ such that
\[
f_{T^{-1}}{\mathrm{P}}_{\beta(T),T}\Rightarrow{\mathrm{Q}}_{\gamma(\chi)}~~~{\rm as}~~{T \rightarrow \infty}.
\]
Before formulating this as a theorem, let us describe the limiting
measures~$ \mathrm{Q}_{\gamma}$. They were introduced in
\cite{CKMV10} as the polymer measures  on $C([0,1],\mathbb{R}^3)$
corresponding to zero-range attracting potentials (i.e., the
potentials that are, roughly speaking, concentrated at the
origin). More precisely, it was shown in \cite{CKMV10} that,
 for each $\gamma \in \mathbb{R}$, the
polymer measures corresponding to the potentials
\begin{equation}
\label{poten} v_\gamma^\varepsilon(x) = (\frac{\pi^2}{8\varepsilon^2}
+ \frac{\gamma}{\varepsilon}) \chi_{B} (\frac{x}{\varepsilon})
\end{equation}
converge as $\varepsilon \downarrow 0$. (Here $\chi_{B}$ is the
indicator function of the unit ball centered at the origin.) The
limit will be denoted by $\mathrm{Q}_\gamma$. It is worth noting
that all the measures $Q_{\gamma}$ are spherically symmetric and
thus the dependence of the polymer measures
$\mathrm{P}_{\beta(T),T}$ on the `shape' of the potential
disappears in the limit.

Another, less intuitive, but more convenient, way to define the
measures $ \mathrm{Q}_{\gamma}$ is through the self-adjoint
extensions of the Laplacian acting on $ \mathbb{R}^3 \setminus 0$.
Namely, it was shown in \cite{AGHH05} that there is a
one-parameter family $\{\mathcal{L}_{\gamma},
\gamma\in\mathbb{R}\}$ of self-adjoint operators acting on $ L^2(
\mathbb{R}^3)$ such that $ \mathcal{L}_\gamma f  = \Delta f$
whenever $f \in C_0^\infty (\mathbb{R}^3 \setminus 0)$. The kernel
of $\exp( t \mathcal{L}_{\gamma})$, $t > 0$, is given by
\begin{equation}\label{fund_zerorange}
\overline{p}_{\gamma}(t,x,y)=\frac{e^{-|x-y|^2/2t}}{(2\pi
t)^{3/2}}+\frac{1}{4\pi^2
i}\int_{\Gamma(a)}\frac{e^{-\sqrt{2\lambda}(|x|+|y|)+\lambda
t}}{(\sqrt{2\lambda}-\gamma)|x||y|}d\lambda,
\end{equation}
where $x,y \neq 0$, $\Gamma(a)=\{z\in\mathbb{C}|{\rm Re} z=a\}$,
and $a>\gamma^2/2$. Thus $\overline{p}_{\gamma}(t,x,y)$ can be
interpreted as the formal fundamental solution  of the parabolic
equation
\[
\frac{\partial u}{\partial t}=\mathcal{L}_{\gamma}u.
\]
By analogy with (\ref{fdd}), we can define the measures $
\overline{\mathrm{P}}^x_{\gamma,T}$, whose finite-dimensional
distributions are given by the formula
\begin{align*}
\overline{\mathrm{P}}^x_{\gamma,T}&(\omega(t_1)\in A_1,...,\omega(t_n)\in A_n)=\\
&=\frac{1}{\overline{Z}_{\gamma,T}(x)}\int_{A_1}...\int_{A_n}\int_{\mathbb{R}^d}\overline{p}_{\gamma}
(t_1,x,x_1)...\overline{p}_{\gamma}(T-t_n,x_n,y)dydx_n...dx_1
\end{align*}
for $0\leq t_1\leq...\leq t_n\leq T$ and $A_1,...,A_n\in\mathcal{B}(\mathbb{R}^3)$, where
\[
{\overline{Z}}_{\gamma,T}(x)=\int_{\mathbb{R}^3}\bar{p}_{\gamma}(t,x,y)dy
= 1+\frac{1}{2\pi i}\int_{\Gamma(a)}\frac{e^{\lambda
t}}{\sqrt{2\lambda}-\gamma}\frac{e^{-\sqrt{2\lambda}|x|}}{\lambda|x|}d\lambda.
\]
Note the dependence of the measures on the initial point $x \in
\mathbb{R}^3$. In fact, neither $\overline{p}_{\gamma}(t_1,x,y)$
nor $ {\overline{Z}}_{\gamma,T}(x)$ are defined for $x = 0$, but
we can make sense out of $\overline{\mathrm{P}}^0_{\gamma,T}$ by
taking the limit of $\overline{\mathrm{P}}^x_{\gamma,T}$ as $x
\rightarrow 0$. If we put $ \mathrm{Q}_\gamma =
\overline{\mathrm{P}}^0_{\gamma,1}$, we'll obtain the same family
of measures corresponding to zero-range potentials.

It is also worth mentioning that the assumption $d = 3$ is
important. Indeed, there is no phase transition for $d=1,2$, while
for dimensions higher than 3 there are no nontrivial self-adjoint
extensions of the Laplacian.

\section{The main result}
It was shown in Lemmas 5.4 and 5.5 of \cite{CKMV09} that the solution space of the problem
\[
\frac{1}{2}\Delta\psi+\beta_{\rm cr} v(x)\psi=0,\qquad\psi(x)=\mathcal{O}(|x|^{-1}),
\qquad\frac{\partial\psi}{\partial r}(x)=\mathcal{O}(|x|^{-2})~~~{\rm as}~~|x|\to\infty
\]
is one-dimensional, and $\psi$ can be chosen to be positive.

The main result of our paper is the following.
\begin{theorem}\label{main_result}
If $(\beta(T)-\beta_{\rm cr})\sqrt{T}\to\chi$, then
\[
f_{T^{-1}}\mathrm{P}_{\beta(T),T}\Rightarrow \mathrm{Q}_{\gamma(\chi)}~~~{\rm as}~~T\to\infty,
\]
where $\gamma(\chi) = c \chi$ with
\[
c = {\sqrt{2}}/({\beta_{\rm cr}^2\gamma_1})~~~{\rm and}~~~
\gamma_1=\frac{(\int_{\mathbb{R}^3}v(x)\psi(x)dx)^2}{\sqrt{2}\pi\int_{\mathbb{R}^3}v(x)\psi^2(x)dx}.
\]
\end{theorem}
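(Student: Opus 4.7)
By standard tightness-plus-finite-dimensional-distribution arguments and the explicit formula (\ref{fdd}) (together with its zero-range analogue for $\overline{\mathrm{P}}^0_{\gamma,1}$), the theorem reduces to proving that, after the diffusive rescaling, the fundamental solution converges:
\[
T^{3/2}\,p_{\beta(T)}(Tt,\sqrt{T}x,\sqrt{T}y)\;\longrightarrow\;\overline{p}_{\gamma(\chi)}(t,x,y),
\]
uniformly on compact subsets of $\{t>0,\,x,y\neq 0\}$, with enough integrability to handle the normalization $Z_{\beta(T),T}$ and the initial point $x=0$ (via the same $x\to 0$ limit used to define $\mathrm{Q}_\gamma$). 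Since both kernels admit Laplace representations, it is natural to pass to the resolvent level: writing $G^\beta_\lambda := (\lambda - H_\beta)^{-1}$, expressing $p_\beta$ as an inverse Laplace transform along $\Gamma(a)$, and substituting $\lambda=\mu/T$, the goal becomes
\[
T^{1/2}\,G^{\beta(T)}_{\mu/T}(\sqrt{T}x,\sqrt{T}y)\;\longrightarrow\;\overline{G}^{\gamma(\chi)}_\mu(x,y)\;:=\;G_\mu(x,y) + \frac{e^{-\sqrt{2\mu}(|x|+|y|)}}{2\pi(\sqrt{2\mu}-\gamma(\chi))\,|x|\,|y|}
\]
uniformly in $\mu\in\Gamma(a)$, where $\overline{G}^\gamma_\mu$ is read off from (\ref{fund_zerorange}). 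The free part $T^{1/2}G_{\mu/T}(\sqrt{T}x,\sqrt{T}y)=G_\mu(x,y)$ is exact by homogeneity.

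\textbf{Identification of $c$.} The interacting part is handled by the Lippmann--Schwinger identity
\[
G^\beta_\lambda - G_\lambda \;=\; G_\lambda\sqrt{v}\,(I - K^\beta_\lambda)^{-1}\,\beta\sqrt{v}\,G_\lambda, \qquad K^\beta_\lambda := \beta\sqrt{v}\,G_\lambda\sqrt{v},
\]
with $K^\beta_\lambda$ compact on $L^2(\mathbb{R}^3)$. At $(\beta_{\rm cr},0)$, the eigenvalue equation $H_{\beta_{\rm cr}}\psi=0$ (equivalently $G_0 v\psi = \psi/\beta_{\rm cr}$) translates into $K^{\beta_{\rm cr}}_0 e = e$ with normalized eigenvector $e := \sqrt{v}\,\psi/\|\sqrt{v}\,\psi\|$. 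Feshbach projection onto $e$ gives
\[
(I-K^\beta_\lambda)^{-1}\;=\;-\,\frac{|e\rangle\langle e|}{\langle e,(K^\beta_\lambda-K^{\beta_{\rm cr}}_0)\,e\rangle}\;+\;(\text{regular}),
\]
and the expansion $(G_\lambda-G_0)(z,z')=-\sqrt{2\lambda}/(2\pi)+O(\lambda)$ uniform on compacts yields
\[
\langle e,(K^\beta_\lambda-K^{\beta_{\rm cr}}_0)\,e\rangle \;=\; \frac{\beta-\beta_{\rm cr}}{\beta_{\rm cr}} \;-\; \frac{\beta_{\rm cr}\sqrt{2\lambda}\,\bigl(\int v\psi\,dx\bigr)^2}{2\pi\int v\psi^2\,dx} \;+\; (\text{lower order}).
\]
Combined with the far-field asymptotics $\int v(z)\,\psi(z)\,G_\lambda(z,X)\,dz \sim (2\pi|X|)^{-1}e^{-\sqrt{2\lambda}|X|}\int v\psi$ for $|X|$ bounded away from $\mathrm{supp}\,v$, the singular contribution collapses after elementary algebra to
\[
G^\beta_\lambda(x,y)-G_\lambda(x,y)\;\approx\;\frac{e^{-\sqrt{2\lambda}(|x|+|y|)}}{2\pi|x||y|\,\bigl(\sqrt{2\lambda}-c(\beta-\beta_{\rm cr})\bigr)}, \qquad c=\frac{\sqrt{2}}{\beta_{\rm cr}^2\,\gamma_1}.
\]
Substituting $\lambda=\mu/T$, $\beta-\beta_{\rm cr}=\chi/\sqrt{T}$, $x,y\mapsto\sqrt{T}x,\sqrt{T}y$ and multiplying by $T^{1/2}$ yields exactly $\overline{G}^{c\chi}_\mu(x,y)-G_\mu(x,y)$, identifying both the limit measure $\mathrm{Q}_{\gamma(\chi)}$ and the explicit constant $c$ asserted in the theorem.

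\textbf{Main obstacle.} The real work is to upgrade these pointwise asymptotics to estimates strong enough to (a) interchange the $T\to\infty$ limit with the $\mu$-contour integral and with the spatial integrations in (\ref{fdd}), and (b) deduce tightness of $\{f_{T^{-1}}\mathrm{P}_{\beta(T),T}\}$ via a Kolmogorov moment criterion. The required uniform control decomposes into: quantitative bounds on the Feshbach regular remainder for $|\mu|$ large on $\Gamma(a)$ (via a spectral gap of $K^{\beta_{\rm cr}}_0$ and a Fredholm-determinant factorization); quantitative error terms in the far-field expansion of $G_\lambda v\psi(\sqrt{T}x)$ that are uniform in $\mu$ along $\Gamma(a)$; and the matching at $x=0$, carried out by taking $x\to 0$ on both sides and comparing with the definition of $\mathrm{Q}_\gamma$. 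Convergence of the partition function $Z_{\beta(T),T}$ to $\overline{Z}_{\gamma(\chi),1}(0)$ then follows from the same resolvent analysis after integration in $y$.
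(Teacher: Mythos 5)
Your plan is correct in outline and your computation of the pole coefficient is right; the overall strategy matches the paper's (Laplace transform to the resolvent level, rank-one pole extraction near $(\beta_{\rm cr},0)$, far-field asymptotics of the free Green's function, convergence of finite-dimensional distributions plus tightness). The genuine variation is in how the pole coefficient is isolated. You use the symmetrized Birman--Schwinger kernel $K^\beta_\lambda=\beta\sqrt{v}G_\lambda\sqrt{v}$ on $L^2$ and a Feshbach projection onto $e=\sqrt{v}\psi/\|\sqrt{v}\psi\|$, computing $\langle e,(K^\beta_\lambda-K^{\beta_{\rm cr}}_0)e\rangle$ directly from the expansion $G_\lambda-G_0=-\sqrt{2\lambda}/(2\pi)+\mathcal{O}(\lambda)$. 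The paper instead works with the non-symmetrized operator $A(\lambda)=vR_0(\lambda)$ on the weighted space $C_{\rm exp}(\mathbb{R}^3)$ and quotes from \cite{CKMV09} the asymptotics $1/\beta_0(\lambda)=1/\beta_{\rm cr}-\gamma_1\sqrt{\lambda}+\mathcal{O}(\lambda)$ (Lemma~\ref{princip_eig_inv_asym}) together with the pole decomposition $(I+\beta A(\lambda))^{-1}=\frac{\beta_0(\lambda)}{\beta_0(\lambda)-\beta}(B+S(\lambda))+C(\lambda,\beta)$ (Lemma~\ref{opasym}). The two routes are algebraically equivalent and both land on $c=\sqrt{2}/(\beta_{\rm cr}^2\gamma_1)$; yours is more self-contained (you essentially re-prove Lemma~\ref{princip_eig_inv_asym}b on the symmetrized side), while the paper's choice of $C_{\rm exp}$ lets it read off pointwise kernel estimates directly from the operator bounds, rather than first obtaining an $L^2$ resolvent expansion and then upgrading to the kernel level. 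Your ``Main obstacle'' paragraph correctly identifies what still needs to be done -- uniform control along the $\Gamma$-contour, the $x\to 0$ matching, the partition-function limit, and tightness -- and this is exactly what the paper packages into Propositions~\ref{cauchy1}--\ref{partition_sum} and defers, for tightness, to a reference; since you do not carry these estimates out, your submission is a correct blueprint but not a complete proof at the same level of rigor as the paper.
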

The proof will rely on the asymptotic formulas for the fundamental solution of (\ref{frst}) and the partition
function, which can be found in the right-hand side of (\ref{fdd}). We formulate these asymptotic formulas
in several propositions below.

Let us introduce the space
\[
C_{\rm exp}(\mathbb{R}^3)=\left\{f\in
C(\mathbb{R})\bigg|||f||_{C_{\rm exp}}:=
\sup_{x\in\mathbb{R}^3}(|f(x)|e^{|x|^2})<\infty\right\}
\]
For $f\in C_{\rm exp}(\mathbb{R}^3)$, the solution of the parabolic problem

\begin{equation}\label{elliptic_eq}
\frac{\partial u}{\partial t}=H_{\beta(T)}u,\qquad u(0,x)=f(x)
\end{equation}
is given by the inverse Laplace transform
\begin{equation} \label{ltra}
u_{\beta(T)}(t,x)=-\frac{1}{2\pi i}
\int_{\Gamma(\lambda_0(\beta(T))+\delta/T)}e^{\lambda t}(R_{\beta(T)}(\lambda)f)(x)d\lambda,
\end{equation}
where $\lambda_0(\beta)=\sup\sigma(H_{\beta})$, $\delta>0$, and
$R_{\beta}(\lambda)=(H_{\beta}-\lambda I)^{-1}$ is the resolvent
of $H_{\beta}$.

\begin{proposition}\label{cauchy1}
For $\epsilon>0$,  we have
\begin{enumerate}
\item (The solution of the Cauchy problem)
\begin{equation}\label{thm2result}
u_{\beta(T)}(t,x)=\frac{1}{\sqrt{T}} \frac{\alpha(f)}{2\pi i}
\int_{\Gamma(\frac{\gamma^2}{2}+\delta)}
\frac{e^{\frac{\lambda t}{T}}}{\sqrt{2\lambda}-\gamma}
\frac{e^{-\sqrt{2\lambda}\frac{|x|}{\sqrt{T}}}}{|x|}d\lambda+q^f(T,t,x)
\end{equation}
with $\gamma={\sqrt{2}\chi}/({\gamma_1\beta_{\rm cr}^2})$, $\delta>0$,
and
\[
\alpha(f)=\kappa\int_{\mathbb{R}^3}\psi(x)f(x)dx,\qquad\kappa=
\frac{1}{\beta_{\rm cr}\int_{\mathbb{R}^3}v(x)\psi(x)dx},
\]
where the error term satisfies
\begin{equation}\label{thm2error}
\sup_{\epsilon T\leq t\leq
T,\epsilon\sqrt{T}\leq|x|\leq\epsilon^{-1}\sqrt{T}}|q^f(T,t,x)|=
||f||_{C_{\rm exp}}\mathcal{O}(T^{-3/2}),
\end{equation}

\item (The fundamental solution)
\begin{equation}\label{thm2fundam}
p_{\beta(T)}(t,y,x)=\frac{1}{\sqrt{T}}\frac{\kappa\psi(y)}{2\pi i}
\int_{\Gamma\left(\frac{\gamma^2}{2}+\delta\right)}
\frac{e^{\frac{\lambda{t}}{T}}}{\sqrt{2\lambda}-\gamma}
\frac{e^{-\sqrt{2\lambda}\frac{|x|}{\sqrt{T}}}}{|x|}d\lambda+q(T,t,y,x),
\end{equation}
where
\[
\sup_{\epsilon T\leq t\leq T,\epsilon\sqrt{T}\leq|x|\leq\epsilon^{-1}
\sqrt{T},|y|\leq \epsilon^{-1}}T|q(T,t,y,x)|\to 0.
\]
\end{enumerate}
\end{proposition}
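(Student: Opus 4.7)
The plan is to start from the Laplace-inversion formula (\ref{ltra}), rescale the spectral variable by $\lambda=\mu/T$ so that the contour becomes $\Gamma(T\lambda_0(\beta(T))+\delta)$, and identify the resolvent $R_{\beta(T)}(\mu/T)$ asymptotically in the outer region $|x|\sim\sqrt{T}$. As a preliminary step I would show $T\lambda_0(\beta(T))\to\gamma^2/2$ via a Birman--Schwinger argument: the eigenvalue $\lambda_0(\beta)$ is characterized by $K(\lambda_0(\beta))\phi_\beta=-\beta^{-1}\phi_\beta$ with $K(\lambda)=\sqrt{v}R_0(\lambda)\sqrt{v}$, and at $\lambda=0$ the operator $K(0)$ has the simple eigenvalue $-1/\beta_{\rm cr}$ with eigenvector $\sqrt{v}\psi$. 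The expansion
\[
K(\lambda)-K(0)=\frac{\sqrt{2\lambda}}{2\pi}\,\sqrt{v}\otimes\sqrt{v}+\mathcal{O}(\lambda),
\]
coming from the Taylor expansion of the free Green's function kernel, combined with first-order perturbation theory yields $\sqrt{\lambda_0(\beta)}=(\beta-\beta_{\rm cr})/(\gamma_1\beta_{\rm cr}^2)+o(\beta-\beta_{\rm cr})$, where $\gamma_1=(\int v\psi)^2/(\sqrt{2}\pi\int v\psi^2)$ is exactly the constant in the statement.

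The main step is the uniform asymptotic expansion of $R_{\beta(T)}(\mu/T)f$. Writing $R_\beta(\lambda)=R_0(\lambda)(I+\beta vR_0(\lambda))^{-1}$ and using the Birman--Schwinger setup, I would decompose
\[
(I+\beta(T)R_0(\mu/T)v)^{-1}=\frac{P_T(\mu)}{\eta_T(\mu)}+Q_T(\mu),
\]
where $\eta_T(\mu)$ is the small eigenvalue (vanishing at $\mu=T\lambda_0(\beta(T))$), $P_T(\mu)$ is the corresponding rank-one projector (approximating the projector onto $v\psi$), and $Q_T(\mu)$ is uniformly bounded on the contour. Expanding $\eta_T(\mu)$ via the same perturbative calculation as above gives $\eta_T(\mu)=c\,(\sqrt{2\mu}-\gamma)/\sqrt{T}+\mathcal{O}(T^{-1})$, which produces both the $1/\sqrt{T}$ prefactor and the factor $\sqrt{2\mu}-\gamma$ in the denominator of (\ref{thm2result}). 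Composing with the far-field asymptotic
\[
R_0(\mu/T)(x,y)=-\frac{e^{-\sqrt{2\mu}|x|/\sqrt{T}}}{2\pi|x|}\bigl(1+\mathcal{O}(T^{-1/2})\bigr),\quad|x|\sim\sqrt{T},\;|y|=\mathcal{O}(1),
\]
and with the compact-set limit $R_0(\mu/T)f\to R_0(0)f$, the two pairings of $P_T$ with $\sqrt{v}\psi$ on one side and with $f$ on the other produce the coefficient $\alpha(f)=\kappa\int\psi(y)f(y)\,dy$; the normalization $\int v\psi^2$ in the denominator of $\gamma_1$ cancels against the norm of $P_T$, and what survives is $\kappa=1/(\beta_{\rm cr}\int v\psi)$. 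The free term $R_0(\mu/T)f$, the regular part $Q_T(\mu)$, and all subleading corrections combine to yield the error $q^f$, which after Laplace inversion is bounded by $\|f\|_{C_{\rm exp}}\mathcal{O}(T^{-3/2})$. Part (2) is the specialization $f=\delta_y$, where $\alpha(\delta_y)=\kappa\psi(y)$ gives (\ref{thm2fundam}); the improved $o(T^{-1})$ error, uniform in $|y|\leq\epsilon^{-1}$, follows by applying part (1) to a smooth approximation of $\delta_y$ and using continuity of $\psi$ and standard parabolic regularity to pass to the limit.

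The main obstacle will be controlling the asymptotic expansion uniformly in $\mu$ along the whole contour $\Gamma(\gamma^2/2+\delta)$ at the required $T^{-3/2}$ rate. Near the pole $\mu=\gamma^2/2$ one must expand $\eta_T(\mu)$ one order past the leading term so that the subleading contributions of $Q_T$ and of the $\mathcal{O}(T^{-1/2})$ correction to $R_0(\mu/T)(x,y)$ combine into errors of the right size; on the tails $|\mu|\to\infty$ one must combine the exponential decay of $e^{-\sqrt{2\mu}|x|/\sqrt{T}}$ with a quantitative operator bound on $Q_T(\mu)$ to ensure absolute integrability with a quantitative tail estimate. Finally, keeping track of the constant $\gamma_1$ through every perturbative step---in particular the interplay between the pairing $\int v\psi$ (which appears twice, in both factors of $\alpha(f)$) and the normalization $\int v\psi^2$ (hidden inside the definition of $P_T$)---is the algebraic bookkeeping most likely to require patience.
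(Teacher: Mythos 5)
Your proposal is correct and follows essentially the same route as the paper. Both start from the Laplace-inversion formula, rescale $\lambda\mapsto\lambda/T$ so the contour stabilizes at $\Gamma(\gamma^2/2+\delta)$, isolate the rank-one pole of $(I+\beta(T)\,vR_0(\lambda))^{-1}$ near $\lambda=0$, combine the $\sqrt{\lambda}$-expansion of the Birman--Schwinger eigenvalue with $\beta(T)-\beta_{\rm cr}\sim\chi/\sqrt{T}$ to produce the factor $\sqrt{T}/(\sqrt{2\lambda}-\gamma)$, and then compose with the far-field asymptotics of the free Green's function; the only cosmetic difference is that you parametrize the pole via the symmetrized operator $\sqrt{v}R_0\sqrt{v}$ and its eigenvalue $\eta_T(\mu)$, while the paper uses $A(\lambda)=vR_0(\lambda)$ together with the inverse function $\beta_0(\lambda)$ of the principal eigenvalue $\lambda_0(\beta)$ (Lemmas~\ref{princip_eig_inv_asym} and \ref{opasym}), which are equivalent bookkeeping devices.
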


We'll also need the asymptotics of the fundamental solution when $y$ is of order~$\sqrt{T}$.

\begin{proposition}\label{cauchy2}
For $\epsilon>0$,
\begin{equation}\label{thm2asym}
p_{\beta(T)}(t,y,x)=p_0(t,y,x)+\frac{1}{\sqrt{T}}\frac{1}{4\pi^2i}
\int_{\Gamma(\frac{\gamma^2}{2}+\delta)}\frac{e^{\frac{\lambda t}{T}-\sqrt{2\lambda}
\frac{|x|+|y|}{\sqrt{T}}}}{(\sqrt{2\lambda}-\gamma)|x||y|}d\lambda+q(T,t,y,x),
\end{equation}
where $p_0$ is the fundamental solution of the heat equation and
\begin{equation}\label{err_term}
\sup_{\epsilon T\leq t\leq T,\epsilon\sqrt{T}\leq |y|,|x|\leq\epsilon^{-1}\sqrt{T}}T^{3/2}q(T,t,y,x)\to 0.
\end{equation}
\end{proposition}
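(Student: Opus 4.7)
The plan is to reduce Proposition~\ref{cauchy2} to Proposition~\ref{cauchy1} by applying the second resolvent identity to separate the ``unperturbed'' and ``correction'' parts of $R_{\beta(T)}$. Starting from the inverse-Laplace representation (\ref{ltra}) applied to the fundamental solution, I would use $R_{\beta(T)} = R_0 - R_0\,(\beta(T) v)\,R_{\beta(T)}$ to split
\[
R_{\beta(T)}(\lambda)(y,x) = R_0(\lambda)(y,x) - \int R_0(\lambda)(y,z)\,\beta(T) v(z)\,R_{\beta(T)}(\lambda)(z,x)\,dz.
\]
The first piece contributes exactly $p_0(t,y,x)$ after inverse Laplace transform, accounting for the leading term in \eqref{thm2asym}. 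The task is to identify the second, ``correction'' piece with the displayed contour integral in \eqref{thm2asym}, up to an error of $o(T^{-3/2})$.

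Both factors in the correction integrand are amenable to approximation since $z$ ranges over the fixed compact set $\mathrm{supp}\,v$ while $|y|,|x|\geq\epsilon\sqrt{T}$. On the relevant contour, after the rescaling $\mu=\lambda T$, one has $|\sqrt{2\lambda}|=O(T^{-1/2})$, so Taylor expansion in $z$ about the origin gives
\[
R_0(\lambda)(y,z) = -\frac{e^{-\sqrt{2\lambda}|y-z|}}{2\pi|y-z|} = -\frac{e^{-\sqrt{2\lambda}|y|}}{2\pi|y|}\bigl(1+O(T^{-1/2})\bigr)
\]
uniformly for $z\in\mathrm{supp}\,v$, and the resolvent-level asymptotic embedded in the proof of Proposition~\ref{cauchy1}(2) (the pointwise statement that precedes the final Bromwich integration there) gives
\[
R_{\beta(T)}(\lambda)(z,x) \approx -\sqrt{T}\,\kappa\psi(z)\,\frac{e^{-\sqrt{2\lambda}|x|}}{(\sqrt{2\lambda T}-\gamma)|x|}.
\]
Multiplying these two expansions, integrating against $\beta(T) v(z)\,dz$, and using $\beta(T)\to\beta_{\rm cr}$ together with the identity $\kappa\beta_{\rm cr}\int v\psi=1$, the cross-section in the potential collapses to a scalar factor of $1$ at leading order, producing a closed-form approximation for $R_{\beta(T)}(\lambda)(y,x)-R_0(\lambda)(y,x)$ whose inverse Laplace transform, followed by the substitution $\mu=\lambda T$, is precisely the second term of \eqref{thm2asym} (the $\tfrac{1}{\sqrt{T}}\cdot\tfrac{1}{4\pi^2 i}$ prefactor arises from combining the $\sqrt{T}$ in the approximation, the $1/T$ Jacobian, the $2\pi$ in the spatial denominator, and the $2\pi i$ of the Bromwich formula).

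The main obstacle will be the uniform error control~\eqref{err_term}. The correction term is itself only $O(T^{-1/2})$, and the Taylor remainders in both resolvent approximations are of relative order $T^{-1/2}$; one therefore must show that these remainders contribute only $o(T^{-3/2})$ after contour integration. This will require (i) exploiting the exponential decay $e^{-\sqrt{2\mu}(|x|+|y|)/\sqrt{T}}$ on the rescaled contour $\Gamma(\gamma^2/2+\delta)$, (ii) carefully handling a fixed neighborhood of the simple pole at $\sqrt{2\mu}=\gamma$ (the emergent bound state at criticality), and (iii) checking that the resolvent-level approximation of Proposition~\ref{cauchy1} is uniform in $\lambda$ along the contour, with a remainder that remains integrable after multiplication by the Bromwich exponential factor.
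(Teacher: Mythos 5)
Your proposal is essentially the paper's argument translated from the time domain to the Laplace domain: the paper sets $u_T=p_{\beta(T)}-p_0$, applies the Duhamel formula $u_T(t,y,x)=\int_0^t\int p_{\beta(T)}(t-s,z,x)\,\beta v(z)\,p_0(s,y,z)\,dz\,ds$ (the inverse-Laplace image of your second resolvent identity), inserts the asymptotics of Proposition~\ref{cauchy1}(2), Taylor-expands the free kernel in the compactly supported variable $z$, and evaluates the time-convolution by a Laplace-transform product identity, which is where your Bromwich integral appears. The cancellation you invoke via $\kappa\beta_{\rm cr}\int v\psi=1$ and the way you plan to control the $h_T^{(1)}$- and $h_T^{(2)}$-type remainders are the same as in the paper, so I would call this the same approach rather than a genuinely different route.
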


We will need one more result concerning the behavior of the
partition sum with respect to the measure $\mathrm{P}_{0,T}^x$
(the Wiener measure $\mathrm{P}_{0,T}$ translated by the vector
$x$).

\begin{proposition}\label{partition_sum}
If $Z_{\beta,t}(x)=\mathrm{E}_{0,T}^x e^{\beta
\int_0^Tv(\omega(t))d t}$, then for every $\epsilon>0$, we have
\[
Z_{\beta(T),t}(x)=1+\frac{\sqrt{T}}{2\pi
i}\int_{\Gamma\left(\frac{\gamma^2}{2}+\delta\right)}
\frac{e^{\frac{\lambda t}{T}}}{\sqrt{2\lambda}-\gamma}
\frac{e^{-\sqrt{2\lambda}\frac{|x|}{\sqrt{T}}}}{\lambda|x|}d\lambda+q^Z(T,t,x),
\]
where
\[
\lim_{T\to\infty}\sup_{\epsilon\sqrt{T}\leq|x|\leq\epsilon^{-1}\sqrt{T},\epsilon
T\leq t\leq T}|q^Z(T,t,x)|=0.
\]
\end{proposition}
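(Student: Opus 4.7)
The plan is to reduce the proposition to Proposition~\ref{cauchy1}(1) via Duhamel's principle. Since $Z_{\beta,t}(x)$ solves $\partial_t Z=H_\beta Z$ with $Z(0,\cdot)\equiv 1$, and $H_{\beta(T)}\mathbf{1}=\beta(T)v$, the function $u(t,x):=Z_{\beta(T),t}(x)-1$ satisfies $\partial_t u=H_{\beta(T)}u+\beta(T)v$ with $u(0,\cdot)\equiv 0$, so Duhamel's formula yields
\[
u(t,x)=\beta(T)\int_0^t w(\tau,x)\,d\tau,\qquad w(\tau,x):=\bigl(e^{\tau H_{\beta(T)}}v\bigr)(x).
\]
Because $v\in C_0^\infty(\mathbb{R}^3)\subset C_{\rm exp}(\mathbb{R}^3)$, Proposition~\ref{cauchy1}(1) applies to $w$ with $f=v$; from the definitions of $\alpha$ and $\kappa$ one checks that $\alpha(v)=\kappa\int v\psi = 1/\beta_{\rm cr}$.

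Substituting the resulting asymptotic expression for $w$ into the Duhamel integral, exchanging contour and time integrations (Fubini on the uniform region) and using the elementary identity $\int_0^t e^{\lambda\tau/T}\,d\tau = T(e^{\lambda t/T}-1)/\lambda$ produces
\[
u(t,x)=\frac{\beta(T)\sqrt{T}}{\beta_{\rm cr}}\cdot\frac{1}{2\pi i}\int_\Gamma\frac{e^{\lambda t/T}-1}{\lambda(\sqrt{2\lambda}-\gamma)}\cdot\frac{e^{-\sqrt{2\lambda}|x|/\sqrt{T}}}{|x|}\,d\lambda + r(T,t,x),
\]
where the remainder $r$ collects (i) the $\tau$-integral over $[\epsilon T,t]$ of the error $q^v(T,\tau,x)=\|v\|_{C_{\rm exp}}\mathcal{O}(T^{-3/2})$, which is $\mathcal{O}(T^{-1/2})$, and (ii) the excluded piece $\beta(T)\int_0^{\epsilon T}w(\tau,x)\,d\tau$.

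Two further cancellations yield the stated formula. First, the ``$-1$'' in the numerator contributes nothing: the factor $e^{-\sqrt{2\lambda}|x|/\sqrt{T}}/(\lambda(\sqrt{2\lambda}-\gamma)|x|)$ is analytic on $\{\mathrm{Re}\,\lambda>\gamma^2/2\}$---its only singularities sit at $\lambda=0$ and $\lambda=\gamma^2/2$---and closing $\Gamma$ by a large semicircle into that half-plane, where $\mathrm{Re}\,\sqrt{2\lambda}\geq 0$ makes $e^{-\sqrt{2\lambda}|x|/\sqrt{T}}$ force the arc integral to zero, drives the total to zero. Second, writing $\beta(T)/\beta_{\rm cr}=1+\mathcal{O}(T^{-1/2})$ and noting that the surviving main integral is uniformly $\mathcal{O}(1)$ (clear after the substitution $\xi=|x|/\sqrt{T}\in[\epsilon,\epsilon^{-1}]$), the resulting multiplicative discrepancy is itself $\mathcal{O}(T^{-1/2})$ and absorbed into $q^Z$.

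The principal technical obstacle is to show that the excluded piece $\beta(T)\int_0^{\epsilon T}w(\tau,x)\,d\tau$ is $o(1)$ uniformly for $|x|\in[\epsilon\sqrt{T},\epsilon^{-1}\sqrt{T}]$, since Proposition~\ref{cauchy1}(1) supplies no useful bound on $w$ for $\tau<\epsilon T$. The Feynman-Kac inequality $p_{\beta(T)}(\tau,x,y)\leq e^{\beta(T)\|v\|_\infty\tau}p_0(\tau,x,y)$, combined with $\mathrm{supp}\,v\subset B_R$ and $|x|\gtrsim\sqrt{T}$, produces a pointwise bound $w(\tau,x)\leq Ce^{\beta(T)\|v\|_\infty\tau - c|x|^2/\tau}\tau^{-3/2}$ whose $\tau$-integral is exponentially small in $\sqrt{T}$ as long as $\tau$ stays below the crossover $\tau_*\sim\sqrt{T}/\sqrt{\beta(T)\|v\|_\infty}$. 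Extending Proposition~\ref{cauchy1}(1) down to $\tau\geq\tau_*$, by revisiting the resolvent analysis that underlies it, closes the remaining gap $[\tau_*,\epsilon T]$; this sharpening is the only genuinely nonalgebraic step required beyond Proposition~\ref{cauchy1}(1).
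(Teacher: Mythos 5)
Your route is genuinely different from the paper's. The paper does not pass through the semigroup $e^{\tau H_{\beta(T)}}v$ at all: it writes the Laplace representation
\[
Z_{\beta(T),t}(x)-1=-\frac{1}{2\pi i}\int_{\Gamma(\lambda_0(\beta(T))+\delta/T)}\frac{e^{\lambda t}}{\lambda}\bigl(R_{\beta(T)}(\lambda)(\beta v)\bigr)(x)\,d\lambda,
\]
which is structurally identical to \eqref{ltra} with an extra $1/\lambda$, and then simply repeats the contour/resolvent analysis from the proof of Proposition~\ref{cauchy1}. Because the time integral has already been absorbed into the $1/\lambda$ factor on the Laplace side, the small-time regime $\tau<\epsilon T$ never arises as a separate case: there is no excluded piece to estimate. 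Your plan instead applies the \emph{conclusion} of Proposition~\ref{cauchy1}(1) pointwise in $\tau$ and integrates via Duhamel, which creates the small-$\tau$ problem from scratch. Your identification of $\alpha(v)=1/\beta_{\rm cr}$, the time integral $\int_0^t e^{\lambda\tau/T}d\tau = T(e^{\lambda t/T}-1)/\lambda$, and the contour-closing argument killing the $-1$ are all correct; that part is a clean way to see where the extra $\lambda$ in the denominator comes from.

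The real problem is the remainder bookkeeping, which is both incomplete and leads you to a false intermediate claim. Writing $u=\beta(T)\int_0^t w$ and substituting the Proposition~\ref{cauchy1}(1) main term on \emph{all} of $[0,t]$ means the residual is $r=\beta(T)\int_{\epsilon T}^t q^v\,d\tau+\beta(T)\int_0^{\epsilon T}\bigl(w-\text{main}\bigr)\,d\tau$; you list only $\int_{\epsilon T}^t q^v$ and the bare excluded piece $\beta(T)\int_0^{\epsilon T}w$, omitting the compensating $-\beta(T)\int_0^{\epsilon T}\text{main}(\tau)\,d\tau$. That omission matters, because $\beta(T)\int_0^{\epsilon T}w\,d\tau$ is \emph{not} $o(1)$ as $T\to\infty$: on $\tau\in[\tau_*,\epsilon T]$ one has $w(\tau,x)\asymp T^{-1}$ for $|x|\asymp\sqrt T$, so its time integral over a window of length $\asymp\epsilon T$ is $O(\epsilon)$, comparable to the main term. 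Only the \emph{difference} $\int_0^{\epsilon T}(w-\text{main})\,d\tau$ is small. So the statement ``the excluded piece is $o(1)$'' is wrong as it stands, and the Feynman--Kac/crossover argument you give addresses the wrong quantity. If you repair the accounting and then try to estimate the difference on $[\tau_*,\epsilon T]$, you will find that you are reproducing the resolvent contour analysis of Proposition~\ref{cauchy1}---at which point you are better off doing exactly what the paper does: prove the Laplace inversion formula for $Z-1$ (cf.\ Lemma 7.1 of \cite{CKMV09}) and run the proof of Proposition~\ref{cauchy1} with the extra $1/\lambda$ in the integrand.
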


\section{Proofs}

Note that the expression (\ref{ltra}) for the solution of
(\ref{elliptic_eq}) contains the resolvent of the operator
$H_\beta$ inside the integral. It will be seen that the main
contribution to the integral comes from the values of $\lambda$
that are close to zero. Therefore, it is important to know the
asymptotics of the resolvent as $\lambda \rightarrow 0$. First,
let us make several observations concerning the spectrum of
$H_\beta$ (a more detailed discussion of the spectral properties
of $H_\beta$ together with the proofs of the following four lemmas can be found in \cite{CKMV09}).

It is well known that for some $0\leq N<\infty$
\[
\sigma(H_{\beta})=(-\infty,0]\cup\{\lambda_j\}_{j=0}^N,
\]
where the eigenvalues are enumerated in decreasing order.

\begin{lemma}
For $\beta\leq\beta_{\rm cr}$, we have $\sup\sigma(H_{\beta})=0$,
while $\sup\sigma(H_{\beta})=\lambda_0(\beta)>0$ for
$\beta>\beta_{\rm cr}$. In the latter case, $\lambda_0(\beta)$ is
a simple eigenvalue. It is a strictly increasing and continuous
function of $\beta$. Moreover, $\lim_{\beta\downarrow\beta_{\rm
cr}}\lambda_0(\beta)=0$ and
$\lim_{\beta\uparrow\infty}\lambda(\beta)=\infty$.
\end{lemma}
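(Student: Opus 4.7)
The plan is to decouple the assertions of the lemma and treat each by a standard tool from the spectral theory of short-range Schr\"odinger operators.

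First I would pin down the essential spectrum. Since $v\in C_0^\infty(\mathbb{R}^3)$ is bounded with compact support, multiplication by $\beta v$ is a relatively compact perturbation of $\frac{1}{2}\Delta$ (one checks that $\beta v (\frac{1}{2}\Delta - z)^{-1}$ is compact on $L^2(\mathbb{R}^3)$, via the compact embedding on the support of $v$). Weyl's theorem then gives $\sigma_{\rm ess}(H_\beta)=\sigma_{\rm ess}(\frac{1}{2}\Delta)=(-\infty,0]$, so every point of $\sigma(H_\beta)\cap(0,\infty)$ is an isolated eigenvalue of finite multiplicity, and finiteness of the collection $\{\lambda_j\}$ follows from Birman--Schwinger (or, more concretely, from the fact that the eigenvalues can accumulate only at $0$ and the number below a fixed positive level is controlled by $\beta\|v\|_\infty$).

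Second I would use the variational principle $\lambda_0(\beta)=\sup_{\|\varphi\|=1}\langle H_\beta\varphi,\varphi\rangle$ to obtain monotonicity. For $\beta_1<\beta_2$, $\langle H_{\beta_2}\varphi,\varphi\rangle-\langle H_{\beta_1}\varphi,\varphi\rangle=(\beta_2-\beta_1)\int v|\varphi|^2\geq 0$ because $v\geq 0$, so $\lambda_0(\beta_2)\geq\lambda_0(\beta_1)$. To get the strict inequality on $\beta>\beta_{\rm cr}$, I would plug in the eigenfunction $\psi_{\beta_1}$ of the top eigenvalue; since $\psi_{\beta_1}$ solves a second-order elliptic equation, weak unique continuation prevents it from vanishing on the open set $\{v>0\}$, giving $\int v|\psi_{\beta_1}|^2>0$ and hence $\lambda_0(\beta_2)>\lambda_0(\beta_1)$. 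Continuity (indeed real analyticity) of $\lambda_0(\beta)$ on $\beta>\beta_{\rm cr}$ then follows from Kato--Rellich perturbation theory, viewing $\{H_\beta\}$ as an analytic family of type (A) in $\beta$, since $\lambda_0(\beta)$ is an isolated simple eigenvalue (established next) separated from the remaining spectrum.

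Third I would establish simplicity of $\lambda_0(\beta)$ by a Perron--Frobenius argument. The Feynman--Kac kernel
\[
p_\beta(t,x,y)=\mathrm{E}\bigl[e^{\beta\int_0^t v(\omega(s))ds}\,\bigm|\,\omega(0)=x,\omega(t)=y\bigr]\,p_0(t,x,y)
\]
is strictly positive on $\mathbb{R}^3\times\mathbb{R}^3$ for every $t>0$, so the semigroup $e^{tH_\beta}$ is positivity improving on $L^2(\mathbb{R}^3)$. The Perron--Frobenius theorem for positivity improving semigroups then gives that the largest spectral value, $e^{t\lambda_0(\beta)}$, is a simple eigenvalue with a strictly positive eigenfunction; this transfers to $H_\beta$ and also justifies the nonvanishing of $\psi_{\beta_1}$ on $\{v>0\}$ used above.

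Fourth I would settle the limiting behavior. Since $\lambda_0(\beta)$ is continuous and strictly increasing on $\{\beta:\lambda_0(\beta)>0\}$, and this set is open in $\beta$ and by definition of $\beta_{\rm cr}$ is exactly $(\beta_{\rm cr},\infty)$, a positive eigenvalue cannot jump discontinuously into the continuous spectrum at $\beta_{\rm cr}$; thus $\lim_{\beta\downarrow\beta_{\rm cr}}\lambda_0(\beta)=0$. For the upper limit, I would plug a fixed test function $\varphi\in C_0^\infty$ with $\varphi\not\equiv 0$ and ${\rm supp}\,\varphi\subset\{v\geq c\}$ (possible since $v$ is not identically zero) into the Rayleigh quotient: $\langle H_\beta\varphi,\varphi\rangle/\|\varphi\|^2\geq\beta c-C_\varphi\to\infty$, hence $\lambda_0(\beta)\to\infty$. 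The main obstacle is the simplicity/positivity step; the other claims follow once one has the positivity-improving property of $e^{tH_\beta}$ rigorously in the $L^2$ setting, which requires some care but is classical for short-range potentials in $\mathbb{R}^3$.
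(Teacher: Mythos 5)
The paper itself does not prove this lemma: it states that this and the surrounding three lemmas are proved in \cite{CKMV09}, so there is no in-paper argument to measure your proposal against. Judged on its own terms, your proposal assembles the right standard machinery. Weyl's theorem with relative compactness of $\beta v$ pins down the essential spectrum as $(-\infty,0]$; the Rayleigh quotient $\sup_{\|\varphi\|=1}\langle H_\beta\varphi,\varphi\rangle$ and $v\geq0$ give monotonicity; the Feynman--Kac semigroup is positivity improving, so Perron--Frobenius yields simplicity of $\lambda_0(\beta)$ together with a strictly positive ground state, which (as you note) makes the separate appeal to unique continuation redundant for the strict inequality on $(\beta_{\rm cr},\infty)$; analytic perturbation theory gives local continuity; and a fixed test function supported in $\{v\geq c\}$ forces $\lambda_0(\beta)\to\infty$.

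The one step that needs tightening is $\lim_{\beta\downarrow\beta_{\rm cr}}\lambda_0(\beta)=0$. The phrase ``a positive eigenvalue cannot jump discontinuously into the continuous spectrum'' is an intuition rather than an argument, and remarking that the set $\{\beta:\lambda_0(\beta)>0\}$ ``is open'' is close to circular, since that openness is precisely a consequence of the continuity you are trying to establish; moreover, the analytic-perturbation continuity you invoke only applies on the open interval, where $\lambda_0(\beta)$ is an isolated eigenvalue, and says nothing about the boundary where the eigenvalue merges with the essential spectrum. The clean fix is to note that $\Lambda(\beta):=\sup\sigma(H_\beta)=\sup_{\|\varphi\|=1}\langle H_\beta\varphi,\varphi\rangle$ is a supremum of affine functions of $\beta$, hence convex and in particular Lipschitz on $[0,\infty)$ with constant $\|v\|_\infty$. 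Monotonicity and the definition of $\beta_{\rm cr}$ give $\Lambda\equiv0$ on $[0,\beta_{\rm cr})$, closedness of $\{\Lambda=0\}$ (from continuity) gives $\Lambda(\beta_{\rm cr})=0$, and right-continuity at $\beta_{\rm cr}$ is exactly the desired limit. This global Lipschitz continuity of $\Lambda$ also delivers the first assertion ($\sup\sigma(H_\beta)=0$ for all $\beta\leq\beta_{\rm cr}$, including the endpoint), which your write-up does not address explicitly, and it supplies the continuity of $\lambda_0$ on $(\beta_{\rm cr},\infty)$ more economically than Kato--Rellich, though both work there.
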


Due to the monotonicity and continuity of
$\lambda=\lambda_0(\beta)$ for $\beta>\beta_{\rm cr}$, we can
define the inverse function
\[
\beta=\beta_0(\lambda):[0,\infty)\to[\beta_{\rm cr},\infty).
\]
Let $\mathbb{C}'=\mathbb{C}\backslash (-\infty, 0]$.

The resolvent $R_{\beta}(\lambda)=(H_{\beta}-\lambda I)^{-1}$ is a
meromorphic operator valued function on $\mathbb{C}'$. Let us
introduce the operator
\[
A(\lambda)=v(x)R_0(\lambda):\qquad C_{\rm exp}(\mathbb{R}^d)\to
C_{\rm exp}(\mathbb{R}^d),\qquad \lambda\in\mathbb{C}'.
\]
We have the following identity for the resolvent
\begin{equation}
R_{\beta}(\lambda)=R_0(\lambda)(I+\beta A(\lambda))^{-1}, \lambda\in\mathbb{C}'.
\end{equation}
From this it is not difficult to show that $1/\beta_0(\lambda)$ is
the principal eigenvalue of $-A(\lambda)$ and using this, we can
extend the domain of $\beta_0$ to
$[0,\infty)\cup(U\cap\mathbb{C}')$, where $U$ is a sufficiently
small neighborhood of zero.
\begin{lemma}\label{princip_eig_inv_asym}
(Asymptotic behavior)
\begin{enumerate}
\item[a)] The principal eigenvalue has the following behavior as $\beta\downarrow\beta_{\rm cr}$
\[
\lambda_0(\beta)=\frac{1}{\gamma^2\beta_{\rm
cr}^4}(\beta-\beta_{\rm cr})^2(1+{o}(1)).
\]
\item[b)] The small $\lambda$ asymptotics of $\beta_0(\lambda)$ is given by
\begin{equation}
\frac{1}{\beta_0(\lambda)}=\frac{1}{\beta_{\rm
cr}}-\gamma_1\sqrt{\lambda}+\mathcal{O}(\lambda),\qquad\lambda\to
0, \lambda\in\mathbb{C}'.
\end{equation}
\end{enumerate}
\end{lemma}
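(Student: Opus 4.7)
Part (a) follows from part (b) by local inversion: setting $\lambda = \lambda_0(\beta)$ in the formula of (b) gives $\sqrt{\lambda_0(\beta)} = (\beta-\beta_{\rm cr})/(\gamma_1\beta\beta_{\rm cr})\,(1+o(1))$, and squaring while using $\beta \to \beta_{\rm cr}$ yields the announced asymptotic. So I focus on (b).

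For (b), the starting point is the explicit free resolvent kernel in $\mathbb{R}^3$,
\[
R_0(\lambda)(x,y) = -\frac{1}{2\pi}\cdot\frac{e^{-\sqrt{2\lambda}|x-y|}}{|x-y|},
\]
obtained from the Green's function of $-\Delta+2\lambda$. Taylor-expanding the exponential produces
\[
A(\lambda) = A(0) + \sqrt{2\lambda}\,B + \lambda\,C(\lambda),\qquad (Bf)(x) = \frac{v(x)}{2\pi}\int_{\mathbb{R}^3}f(y)\,dy,
\]
where $C(\lambda)$ depends analytically on $\sqrt{\lambda}$ near $0$. Since $v$ is compactly supported, each operator is compact on $C_{\rm exp}(\mathbb{R}^3)$, and $B$ has rank one.

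The next step is to identify the principal eigendata of $-A(0)$ at the eigenvalue $1/\beta_{\rm cr}$. Using the Newton-potential representation of $R_0(0)$ together with the defining relation $\frac{1}{2}\Delta\psi + \beta_{\rm cr}v\psi = 0$, a direct computation gives $R_0(0)(v\psi) = -\psi/\beta_{\rm cr}$, from which $\phi_0 := v\psi$ satisfies $-A(0)\phi_0 = (1/\beta_{\rm cr})\phi_0$. The adjoint operator is $-R_0(0)v$ (since $R_0(0)$ and multiplication by $v$ are self-adjoint), and the same identity shows that the left eigenfunction is $\phi_0^* := \psi$. Simplicity of this eigenvalue is furnished by the one-dimensionality in Lemmas 5.4--5.5 of \cite{CKMV09}. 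First-order analytic perturbation theory in $s = \sqrt{2\lambda}$ then gives
\[
\frac{1}{\beta_0(\lambda)} = \frac{1}{\beta_{\rm cr}} - \sqrt{2\lambda}\,\frac{\langle B\phi_0,\phi_0^*\rangle}{\langle \phi_0,\phi_0^*\rangle} + \mathcal{O}(\lambda),
\]
and substituting yields $\langle B\phi_0,\phi_0^*\rangle = (2\pi)^{-1}\big(\int v\psi\big)^2$ and $\langle \phi_0,\phi_0^*\rangle = \int v\psi^2$. Using $\sqrt{2}/(2\pi) = 1/(\sqrt{2}\pi)$, this is exactly $-\gamma_1\sqrt{\lambda}+\mathcal{O}(\lambda)$.

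The main obstacle is justifying analytic perturbation theory on the non-Hilbert space $C_{\rm exp}(\mathbb{R}^3)$ with the non-analytic parameter $\sqrt{\lambda}$. The standard way around this is to work with the Riesz spectral projector $P(\lambda) = -\frac{1}{2\pi i}\oint (A(\lambda)-\zeta)^{-1}d\zeta$ along a small contour around $-1/\beta_{\rm cr}$: compactness of $A(\lambda)$ together with simplicity of the eigenvalue make $P(\lambda)$ a rank-one projector depending analytically on $\sqrt{\lambda}$, reducing the eigenvalue computation to a one-dimensional implicit-function problem whose leading coefficient is read off from the formula above.
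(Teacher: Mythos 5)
Your argument is mathematically sound, and it fills in a proof that the paper itself does not supply: the text explicitly defers the proofs of this and the neighboring three lemmas to \cite{CKMV09}, so there is no in-paper proof to compare against. That said, your route is clearly the ``intended'' one, because the paper's Lemma~\ref{opasym} features precisely the rank-one operator $B(x,y)=\frac{v(x)\psi(x)\psi(y)}{\int v\psi^2}$, which is the Riesz spectral projection onto the eigenspace of $-A(0)$ at $1/\beta_{\rm cr}$ that your computation produces as a by-product. The key identifications all check out: the free-resolvent kernel $-e^{-\sqrt{2\lambda}|x-y|}/(2\pi|x-y|)$ and its expansion in $s=\sqrt{2\lambda}$; $R_0(0)(v\psi)=-\psi/\beta_{\rm cr}$ from $\tfrac12\Delta\psi=-\beta_{\rm cr}v\psi$ and the uniqueness in Lemmas~5.4--5.5 of \cite{CKMV09}; $\phi_0=v\psi$ and the functional $f\mapsto\int\psi f$ as right/left eigendata (note $\psi\notin C_{\rm exp}$ but it does define a bounded functional on $C_{\rm exp}$, which is exactly what perturbation theory on a Banach space requires); and the final arithmetic $\sqrt{2\lambda}\cdot\frac{(2\pi)^{-1}(\int v\psi)^2}{\int v\psi^2}=\gamma_1\sqrt{\lambda}$. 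Your derivation of (a) from (b) by inverting and squaring yields $\lambda_0(\beta)=\frac{(\beta-\beta_{\rm cr})^2}{\gamma_1^2\beta_{\rm cr}^4}(1+o(1))$, which is also worth flagging: the ``$\gamma^2$'' in the paper's statement of part (a) is evidently a typo for $\gamma_1^2$, since $\gamma$ is not yet defined at that point (and later depends on $\chi$). Your handling of the analytic-perturbation issue on the Banach space $C_{\rm exp}$ via the Riesz projector with the analytic parameter $s=\sqrt{2\lambda}$ is the standard and correct resolution; compactness of $A(\lambda)$ (thanks to the compact support of $v$) and simplicity of the eigenvalue are the two ingredients you need, and you supply both. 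In short: correct, and aligned with the spectral machinery the paper itself builds.
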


\begin{lemma}\label{op_bdd}
We have
\begin{itemize}
\item[a)] For every $\epsilon>0$, $R_0(\lambda):C_{\rm exp}(\mathbb{R}^3)\to C(\mathbb{R}^3)$
is a bounded operator with
$||R_0(\lambda)||=\mathcal{O}(|\lambda|^{-1})$ as
$\lambda\to\infty$, $|\arg \lambda|\leq\pi-\epsilon$.

\item[b)] The operator $(I+\beta A(\lambda))^{-1}$ on $C_{\rm exp}(\mathbb{R}^3)$ is meromorphic
in $\mathbb{C}'$. For each $\epsilon,\Lambda>0$, there is a $\tilde{\delta}>0$ such
that it is uniformly bounded operator in $\lambda\in\mathbb{C}'$,
$|\arg\lambda|\leq\pi-\epsilon$,$|\lambda|\geq\Lambda$,
$|\beta-\beta_{\rm cr}|\leq\tilde{\delta}$.
\end{itemize}
\end{lemma}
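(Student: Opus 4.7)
The plan is to treat the two parts separately: part (a) by pointwise integration against the explicit three-dimensional resolvent kernel, and part (b) by an analytic Fredholm argument plus a perturbation around the critical coupling.

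\textbf{Part (a).} The operator $H_0=\tfrac{1}{2}\Delta$ on $\mathbb{R}^3$ has the explicit resolvent kernel
\[
G_\lambda(x,y)=-\frac{e^{-\sqrt{2\lambda}\,|x-y|}}{2\pi|x-y|},
\]
where the branch of the square root is chosen so that $\mathrm{Re}\,\sqrt{2\lambda}>0$ on $\mathbb{C}'$. On the sector $|\arg\lambda|\leq\pi-\epsilon$ one has $\mathrm{Re}\,\sqrt{2\lambda}\geq c_\epsilon|\lambda|^{1/2}$ for some $c_\epsilon>0$. Using $|f(y)|\leq\|f\|_{C_{\rm exp}}$ together with the elementary identity $\int_{\mathbb{R}^3}e^{-\alpha r}/r\,dy=4\pi/\alpha^2$ (with $r=|x-y|$, $\mathrm{Re}\,\alpha>0$), I would obtain $|R_0(\lambda)f(x)|\leq 2\|f\|_{C_{\rm exp}}/(c_\epsilon^2|\lambda|)$ uniformly in $x$, and continuity of $R_0(\lambda)f$ follows from dominated convergence.

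\textbf{Part (b).} I split this into three steps. First, since $v\in C_0^\infty$ is compactly supported, $A(\lambda)f=v\cdot R_0(\lambda)f$ is supported on the fixed compact set $\mathrm{supp}\,v$; standard elliptic estimates on $R_0(\lambda)f$ give uniform $C^1$-bounds on $\mathrm{supp}\,v$, so Arzela--Ascoli shows that $A(\lambda)$ is compact on $C_{\rm exp}(\mathbb{R}^3)$. Since $\lambda\mapsto A(\lambda)$ is norm-analytic on $\mathbb{C}'$, the analytic Fredholm theorem gives that $(I+\beta A(\lambda))^{-1}$ is meromorphic in $\lambda\in\mathbb{C}'$. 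Second, the poles of $(I+\beta_{\rm cr}A(\lambda))^{-1}$ in $\mathbb{C}'$ correspond (via the equivalence $H_{\beta_{\rm cr}}g=\lambda g$) to positive eigenvalues of $H_{\beta_{\rm cr}}$; but $\sup\sigma(H_{\beta_{\rm cr}})=0\notin\mathbb{C}'$, so $(I+\beta_{\rm cr}A(\lambda))^{-1}$ is actually analytic on $\mathbb{C}'$. Part (a), together with the fact that $A(\lambda)f$ is supported on $\mathrm{supp}\,v$, gives $\|A(\lambda)\|_{C_{\rm exp}\to C_{\rm exp}}=\mathcal{O}(|\lambda|^{-1})$, so the Neumann series yields $(I+\beta_{\rm cr}A(\lambda))^{-1}\to I$ as $|\lambda|\to\infty$; combined with analyticity on compact subregions, this shows
\[
M_0:=\sup_{|\lambda|\geq\Lambda,\,|\arg\lambda|\leq\pi-\epsilon}\|(I+\beta_{\rm cr}A(\lambda))^{-1}\|<\infty.
\]

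Third, for $\beta$ near $\beta_{\rm cr}$, I exploit the factorization
\[
(I+\beta A(\lambda))^{-1}=\bigl(I+(\beta-\beta_{\rm cr})(I+\beta_{\rm cr}A(\lambda))^{-1}A(\lambda)\bigr)^{-1}(I+\beta_{\rm cr}A(\lambda))^{-1},
\]
and choose $\tilde\delta\leq 1/(2M_0\sup\|A(\lambda)\|)$ on the region so that the inner Neumann series converges with norm at most $2$, yielding the uniform bound $2M_0$. The main obstacle I anticipate is step one: verifying compactness of $A(\lambda)$ on the weighted space $C_{\rm exp}$ and the norm-analyticity of $\lambda\mapsto A(\lambda)$ carefully enough to invoke analytic Fredholm; the remaining steps are routine resolvent and perturbation bookkeeping.
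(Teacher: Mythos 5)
The paper itself does not prove this lemma; it defers to Cranston--Koralov--Molchanov--Vainberg (\cite{CKMV09}) for all four spectral lemmas, so there is no in-paper proof to compare against. Your argument is correct and is the natural one for this statement.

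Part (a) is fine: the explicit $3$-d free resolvent kernel $-e^{-\sqrt{2\lambda}|x-y|}/(2\pi|x-y|)$, the sector estimate $\mathrm{Re}\sqrt{2\lambda}\geq c_\epsilon|\lambda|^{1/2}$, the $L^1$-norm of the kernel $\int_{\mathbb{R}^3}e^{-\alpha r}/r\,dy=4\pi/\alpha^2$, and continuity of the convolution of an $L^1$ kernel with a bounded function all hold, giving the $\mathcal{O}(|\lambda|^{-1})$ bound on $C_{\rm exp}\to C$.

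Part (b) is also sound. The range restriction $A(\lambda)f = v\,R_0(\lambda)f$ to $\mathrm{supp}\,v$ together with interior elliptic (or just explicit kernel-gradient) estimates yields compactness on $C_{\rm exp}$; norm-analyticity follows from the explicit kernel; analytic Fredholm gives meromorphy. The identification of poles in $\mathbb{C}'$ with eigenvalues of $H_{\beta_{\rm cr}}$ is legitimate because any $g\in C_{\rm exp}$ solving $g=-\beta_{\rm cr}vR_0(\lambda)g$ is supported in $\mathrm{supp}\,v$, hence in $L^2$, and $\psi=R_0(\lambda)g\in L^2$ decays exponentially for $\lambda\in\mathbb{C}'$; since $H_{\beta_{\rm cr}}$ is self-adjoint with $\sup\sigma=0$, there are no such $\lambda$ in $\mathbb{C}'$. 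Combined with the large-$|\lambda|$ Neumann tail, this gives $M_0<\infty$. The resolvent factorization
\[
(I+\beta A)^{-1}=\bigl(I+(\beta-\beta_{\rm cr})(I+\beta_{\rm cr}A)^{-1}A\bigr)^{-1}(I+\beta_{\rm cr}A)^{-1}
\]
and the choice $\tilde\delta\leq 1/(2M_0\sup_{|\lambda|\geq\Lambda}\|A(\lambda)\|)$ then give the uniform bound $2M_0$ for $|\beta-\beta_{\rm cr}|\leq\tilde\delta$. Note that $\sup_{|\lambda|\geq\Lambda}\|A(\lambda)\|<\infty$ is guaranteed by your part (a) bound, so the choice of $\tilde\delta$ is legitimate. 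The only places that deserve a sentence more of care (as you yourself flag) are the compactness and norm-analyticity of $A(\lambda)$ on $C_{\rm exp}$, but both are routine given the compact support of $v$ and the explicit kernel.
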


As for the $\lambda\to 0$ asymptotics,

\begin{lemma}\label{opasym}
There are $\lambda_0>0$ and $\delta_0>0$ such that for
$\lambda\in\mathbb{C}'\cup\{0\}, |\lambda|\leq\lambda_0,
|\beta-\beta_{\rm cr}|\leq\delta_0, \beta\neq\beta_0(\lambda)$, we
have
\[
(I+\beta
A(\lambda))^{-1}=\frac{\beta_0(\lambda)}{\beta_0(\lambda)-\beta}
(B+S(\lambda))+C(\lambda,\beta)
\]
as operators on $C_{\rm exp}(\mathbb{R}^3)$, where $B$ is the one
dimensional operator with kernel
\[
B(x,y)=\frac{v(x)\psi(x)\psi(y)}{\int_{\mathbb{R}^3}v(x)\psi^2(x)dx},
\]
$S=\mathcal{O}(\sqrt{|\lambda|})$, $S(0)=0$, and
$C(\lambda,\beta)$ is bounded uniformly in $\lambda$ and $\beta$.
\end{lemma}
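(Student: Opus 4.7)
The plan is to exploit the observation, made just before Lemma \ref{princip_eig_inv_asym}, that $1/\beta_0(\lambda)$ is the principal eigenvalue of $-A(\lambda)$, and to carry out a Riesz-type spectral decomposition isolating this eigenvalue from the rest of the spectrum. At $\lambda=0$, the operator $A(0)=vR_0(0)$ has $-1/\beta_{\rm cr}$ as an isolated simple eigenvalue (the ground state, corresponding to $\psi$), and by Lemma \ref{op_bdd}(b) combined with continuity of the spectrum, this remains true in a small disk around $\lambda=0$: the only spectral point of $A(\lambda)$ inside a fixed small circle $\Gamma_0\subset\mathbb{C}$ around $-1/\beta_{\rm cr}$ is the simple eigenvalue $-1/\beta_0(\lambda)$. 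I would then define the Riesz projection
\[
P(\lambda)=-\frac{1}{2\pi i}\oint_{\Gamma_0}(A(\lambda)-zI)^{-1}\,dz,
\]
which is a rank-one projection that commutes with $A(\lambda)$.

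On $\mathrm{Im}\,P(\lambda)$, the operator $A(\lambda)$ reduces to multiplication by $-1/\beta_0(\lambda)$, so $I+\beta A(\lambda)$ acts there as multiplication by $(\beta_0(\lambda)-\beta)/\beta_0(\lambda)$, whose inverse is exactly the singular prefactor $\beta_0(\lambda)/(\beta_0(\lambda)-\beta)$. On $\mathrm{Ker}\,P(\lambda)$, the spectrum of $\beta A(\lambda)$ stays uniformly bounded away from $-1$ for $(\lambda,\beta)$ close to $(0,\beta_{\rm cr})$, so $I+\beta A(\lambda)$ is invertible there with operator norm bounded uniformly in $(\lambda,\beta)$. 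Bundling these two pieces yields the decomposition
\[
(I+\beta A(\lambda))^{-1}=\frac{\beta_0(\lambda)}{\beta_0(\lambda)-\beta}\,P(\lambda)+C(\lambda,\beta),
\]
with $C(\lambda,\beta)$ the (bounded) inverse on $\mathrm{Ker}\,P(\lambda)$, extended by zero on $\mathrm{Im}\,P(\lambda)$.

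Next I would identify $P(0)=B$ explicitly. From $\tfrac{1}{2}\Delta\psi+\beta_{\rm cr}v\psi=0$ one reads off $R_0(0)(v\psi)=-\beta_{\rm cr}^{-1}\psi$, hence $A(0)(v\psi)=-\beta_{\rm cr}^{-1}(v\psi)$, so $v\psi$ spans $\mathrm{Im}\,P(0)$. A symmetric computation using $A(0)^{*}=R_0(0)v$ shows that $\psi$ is the adjoint eigenfunction. The standard spectral projection formula therefore gives
\[
(P(0)f)(x)=\frac{\langle f,\psi\rangle}{\langle v\psi,\psi\rangle}\,v(x)\psi(x)
=\frac{v(x)\psi(x)}{\int v\psi^{2}\,dy}\int\psi(y)f(y)\,dy,
\]
which is precisely the operator $B$ of the statement. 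For the remainder $S(\lambda)=P(\lambda)-B$, the three-dimensional free resolvent has kernel $(2\pi|x-y|)^{-1}e^{-\sqrt{2\lambda}|x-y|}$, and expanding the exponential gives $R_0(\lambda)-R_0(0)=\mathcal{O}(\sqrt{|\lambda|})$ as a bounded operator $C_{\rm exp}(\mathbb{R}^{3})\to C(\mathbb{R}^{3})$. Hence $A(\lambda)-A(0)=\mathcal{O}(\sqrt{|\lambda|})$, and substituting this expansion into the Riesz integral defining $P(\lambda)$ produces $S(\lambda)=\mathcal{O}(\sqrt{|\lambda|})$ with $S(0)=0$.

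The main obstacle is the uniform boundedness claim for $C(\lambda,\beta)$: one needs to rule out the possibility that some non-principal spectral value of $A(\lambda)$ migrates into a neighborhood of $-1/\beta$ as $(\lambda,\beta)\to(0,\beta_{\rm cr})$. This would follow by combining the discreteness of $\sigma(A(0))$ near $-1/\beta_{\rm cr}$ with a continuity argument for the remaining spectrum as $\lambda$ varies in a small disk, using Lemma \ref{op_bdd}(b) to control the resolvent on the part of the contour where $|\lambda|$ is not small. Once this separation is in hand, the uniform bound on $(I+\beta A(\lambda))^{-1}\big|_{\mathrm{Ker}\,P(\lambda)}$, and hence on $C(\lambda,\beta)$, is a routine consequence of the Riesz functional calculus.
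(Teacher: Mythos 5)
The paper does not actually prove this lemma; it defers to \cite{CKMV09} for the proofs of this block of spectral lemmas, so there is no in-text argument to compare against. That said, your Riesz-projection argument is the standard route to a ``principal part plus bounded remainder'' expansion of this type, and the key ingredients are all present and correctly assembled: the eigenvalue $-1/\beta_{\rm cr}$ of $A(0)$ with right eigenfunction $v\psi$ and left eigenfunction $\psi$; the rank-one projection formula producing exactly the kernel of $B$ (with the normalization $\langle v\psi,\psi\rangle=\int v\psi^2$); the bound $\|A(\lambda)-A(0)\|_{C_{\rm exp}\to C_{\rm exp}}=\mathcal{O}(\sqrt{|\lambda|})$ via $|e^{-\sqrt{2\lambda}|x-y|}-1|\leq|\sqrt{2\lambda}|\,|x-y|$ and the compact support of $v$; and the decomposition $(I+\beta A(\lambda))^{-1}=\frac{\beta_0(\lambda)}{\beta_0(\lambda)-\beta}P(\lambda)+C(\lambda,\beta)$ with $P(\lambda)=B+S(\lambda)$, where $C(\lambda,\beta)$ is the inverse on $\ker P(\lambda)$ and is uniformly bounded thanks to the spectral gap.

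The one thing I would flag is the repeated appeal to Lemma~\ref{op_bdd}(b) to justify the persistence and isolation of the principal eigenvalue as $\lambda$ varies. That lemma controls $(I+\beta A(\lambda))^{-1}$ in the regime $|\lambda|\geq\Lambda>0$ and says nothing about the spectrum of $A(\lambda)$ near $-1/\beta_{\rm cr}$ for $|\lambda|$ small, which is the regime at issue here. What actually does the work is the $\mathcal{O}(\sqrt{|\lambda|})$ continuity of $A(\lambda)$ you establish later, combined with the compactness of $A(\lambda)$ (so its nonzero spectrum is discrete): a Neumann-series argument then gives $(A(\lambda)-zI)^{-1}$ bounded uniformly for $z\in\Gamma_0$ and $|\lambda|$ small, the rank of $P(\lambda)$ is constant and equal to one, and the rest of the spectrum stays uniformly separated from $-1/\beta$ for $\beta$ near $\beta_{\rm cr}$. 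This is essentially the argument you sketch in your final paragraph; just replace the citation of Lemma~\ref{op_bdd}(b) with the small-$\lambda$ continuity of $A(\lambda)$, which you already prove.
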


Now for $f\in C_{\rm exp}(\mathbb{R}^3)$, define
\[
g_T^f(z,y)=(I+\beta(T)A(z))^{-1}f(y)
\]
The key to the proofs in this paper is the following lemma.

\begin{lemma}
For $\delta>0$,
\begin{equation}\label{lemma1statement}
g_T^{f}\left(\frac{\lambda}{T},y\right)=\sqrt{T}\frac{2\pi
\int_{\mathbb{R}^3}\psi(x)f(x)dx}{\beta_{\rm cr}\left(\int_{\mathbb{R}^3}v(x)\psi(x)dx\right)^2}\frac{v(y)\psi(y)}{\sqrt{2\lambda}-\gamma}+(K(\lambda,T)f)(y)
\end{equation}
as $T\to\infty$ for $\lambda\in\Gamma(\gamma^2/2+\delta)$, where $K(\lambda,T)$ is uniformly bounded as an operator on $C_{\rm exp}(\mathbb{R}^3)$.
\end{lemma}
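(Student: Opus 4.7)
The plan is to substitute $\lambda\mapsto\lambda/T$ and $\beta\mapsto\beta(T)$ into the resolvent expansion furnished by \lemref{opasym}, extract the leading $\sqrt{T}$-singular contribution from the rank-one piece, and absorb everything else into $K(\lambda,T)$. For $T$ large enough that $|\beta(T)-\beta_{\rm cr}|\le\delta_0$, and for $|\lambda|\le\lambda_0 T$ so that $|\lambda/T|\le\lambda_0$, \lemref{opasym} gives
\[
g_T^f(\lambda/T,y)=\frac{\beta_0(\lambda/T)}{\beta_0(\lambda/T)-\beta(T)}\bigl[(Bf)(y)+(S(\lambda/T)f)(y)\bigr]+(C(\lambda/T,\beta(T))f)(y).
\]
The $C$ piece is already uniformly bounded by the statement of that lemma and is absorbed into $K$ without further work.

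The heart of the argument is the prefactor. From \lemref{princip_eig_inv_asym}(b) at $\mu=\lambda/T$, together with the hypothesis $\beta(T)=\beta_{\rm cr}+\chi/\sqrt{T}+o(1/\sqrt{T})$, one computes
\[
\frac{1}{\beta_0(\lambda/T)}-\frac{1}{\beta(T)}=-\frac{\gamma_1\sqrt{\lambda}}{\sqrt{T}}+\frac{\chi}{\beta_{\rm cr}^2\sqrt{T}}+\mathcal{O}(|\lambda|/T)+o(1/\sqrt{T}).
\]
The identity $\chi/\beta_{\rm cr}^2=\gamma\gamma_1/\sqrt{2}$, which is merely a rewriting of the definition $\gamma=\sqrt{2}\chi/(\gamma_1\beta_{\rm cr}^2)$, collapses the leading pair to $-\gamma_1(\sqrt{2\lambda}-\gamma)/\sqrt{2T}$. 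Multiplying by $\beta_0(\lambda/T)\beta(T)\to\beta_{\rm cr}^2$ and inverting then yields
\[
\frac{\beta_0(\lambda/T)}{\beta_0(\lambda/T)-\beta(T)}=\sqrt{T}\cdot\frac{\sqrt{2}}{\beta_{\rm cr}\gamma_1(\sqrt{2\lambda}-\gamma)}+\text{(remainder)}.
\]
Combining this with $(Bf)(y)=v(y)\psi(y)\int\psi(x)f(x)\,dx/\int v\psi^2$ and eliminating $\int v\psi^2$ via the definition $\gamma_1=(\int v\psi)^2/(\sqrt{2}\pi\int v\psi^2)$, the leading term matches the displayed $\sqrt{T}$-coefficient in the lemma exactly, including the factor of $2\pi$.

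The pieces that go into $K(\lambda,T)$ are then: $C$; the contribution of $S(\lambda/T)$; and the remainder from the prefactor expansion. The $S$ piece is controlled because $S(\lambda/T)=\mathcal{O}(\sqrt{|\lambda|/T})$, and multiplication by the $\sqrt{T}/(\sqrt{2\lambda}-\gamma)$-divergent prefactor produces a term of size $\sqrt{|\lambda|}/|\sqrt{2\lambda}-\gamma|$, which is uniformly bounded on $\Gamma(\gamma^2/2+\delta)$ because $|\sqrt{2\lambda}-\gamma|$ is bounded below there. For $|\lambda|\ge\lambda_0 T$, where \lemref{opasym} no longer applies, I would split off that tail of the contour and use \lemref{op_bdd}(b) instead: the full inverse $(I+\beta(T)A(\lambda/T))^{-1}$ is uniformly bounded in that regime, while the singular piece we subtract has operator norm controlled by $\sqrt{T/|\lambda|}\le\sqrt{1/\lambda_0}$, so the two combine into a bounded operator.

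The main obstacle is verifying uniform boundedness of $\|K(\lambda,T)\|$ in $\lambda$ and $T$ simultaneously, specifically the interplay between the multiplicative error in the prefactor and the leading $\sqrt{T}$. The $\mathcal{O}(|\lambda|/T)$ piece of the $\beta_0$ expansion, when propagated through the rank-one $B$, contributes a rank-one operator of size $\sqrt{|\lambda|}/|\sqrt{2\lambda}-\gamma|=\mathcal{O}(1)$; the $o(1/\sqrt{T})$ piece of $\beta(T)-\beta_{\rm cr}$ is $\lambda$-independent and, divided by the bounded-below $|\sqrt{2\lambda}-\gamma|$, remains bounded on the contour. Once this matching is done, the remaining algebra is mechanical, and joining the small-$|\lambda|$ analysis with the large-$|\lambda|$ tail via \lemref{op_bdd}(b) completes the proof.
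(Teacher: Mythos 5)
Your proposal is correct and follows essentially the same route as the paper: substitute $\lambda/T$ and $\beta(T)$ into Lemma~\ref{opasym}, use Lemma~\ref{princip_eig_inv_asym}(b) together with $\beta(T)=\beta_{\rm cr}+\chi/\sqrt{T}+o(1/\sqrt{T})$ to expand the prefactor $\beta_0/(\beta_0-\beta(T))$ into $\sqrt{2T}/(\beta_{\rm cr}\gamma_1(\sqrt{2\lambda}-\gamma))$ plus a bounded remainder, simplify the rank-one $B$ using the formula for $\gamma_1$, and handle the large-$|\lambda|$ tail via Lemma~\ref{op_bdd}(b). The only cosmetic difference is that you expand $1/\beta_0-1/\beta$ directly from Lemma~\ref{princip_eig_inv_asym}(b) where the paper first inverts to an expansion of $\beta_0(z)$ itself, and you make the bound on the $S(\lambda/T)$ contribution explicit; both are inconsequential variations.
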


\begin{proof}
First consider the $|\lambda|<aT$ case for $a$ sufficiently small
in order for the following to make sense. Lemma
\ref{princip_eig_inv_asym} yields
\begin{equation}
\beta_0(z)=\beta_{\rm cr}+\beta_{\rm cr}^2\gamma_1\sqrt{z}+\mathcal{O}(z),\qquad z\to 0, z\in\mathbb{C}'.
\end{equation}
Using that
\begin{equation}\label{betaTasym}
\beta(T)=\beta_{\rm cr}+\frac{\chi}{\sqrt{T}}+o\left(\frac{1}{\sqrt{T}}\right),\qquad T\to\infty,
\end{equation}
we can write for $\lambda\in\Gamma(\gamma^2/2+\delta)$ that
\begin{equation}
\beta_0\left(\frac{\lambda}{T}\right)-\beta(T)=\beta_{\rm
cr}^2\gamma_1\sqrt{\frac{\lambda}{T}}-\frac{\chi}{\sqrt{T}}+
\mathcal{O}\left(\frac{\lambda}{T}\right)+o(1/\sqrt{T})
\end{equation}
as $T\to\infty$. Using the definition of $\gamma$, this can be rewritten as
\begin{align}\label{asymps}
\frac{\beta_0\left(\frac{\lambda}{T}\right)}{\beta_0\left(\frac{\lambda}{T}\right)-\beta(T)}=
\frac{\beta_0\left(\frac{\lambda}{T}\right)}{\frac{\beta_{\rm
cr}^2\gamma_1}{\sqrt{2T}}(\sqrt{2\lambda}-\gamma)+\mathcal{O}
\left(\frac{\lambda}{T}\right)+o\left(\frac{1}{\sqrt{T}}\right)}=\\
\nonumber =\frac{\sqrt{2T}}{\beta_{\rm
cr}\gamma_1(\sqrt{2\lambda}-\gamma)}&+\mathcal{O}_a(1),
\end{align}
where $\mathcal{O}_a(1)$ denotes a bounded quantity depending on $a$.

Combining \eqref{asymps} and Lemma \ref{opasym} gives
\[
g_T^f\left(\frac{\lambda}{T},y\right)=\sqrt{T}\frac{2\pi\int_{\mathbb{R}^3}
\psi(x)f(x)dx}{\beta_{\rm cr}
\left(\int_{\mathbb{R}^3}v(x)\psi(x)dx\right)^2}\frac{v(y)\psi(y)}{\sqrt{2\lambda}-\gamma}+\tilde{\mathcal{O}}_a(1)f,
\]
as $|\lambda|\leq aT$ and $T\to\infty$, where $\tilde{\mathcal{O}}_a(1)$
is a bounded operator.

For $|\lambda|>aT$, the first term on the right hand side of
\eqref{lemma1statement} is uniformly bounded. By Lemma
\ref{op_bdd} and $\beta(T)\to\beta_{\rm cr}$, so is the left hand
side, and therefore their difference too.
\end{proof}
\noindent{\it Proof of Proposition \ref{cauchy1}}. Fix $\delta>0$
and note that
\begin{align}\label{cont_int}
u_{\beta(T)}(t,x)&=\frac{1}{2\pi
i}\int_{\Gamma(\lambda_0(\beta(T))+\delta/T)} e^{\lambda
t}(R_0(\lambda)g_T^f(\lambda,\cdot))(x)d\lambda=\\ \nonumber
&=\frac{1}{2\pi i}\int_{\Gamma(T\lambda_0(\beta(T))+\delta)}
e^{\lambda\frac{t}{T}}\int_{\mathbb{R}^3}\frac{e^{-\sqrt{2\lambda}
\frac{|x-y|}{\sqrt{T}}}}{2\pi|x-y|}g_T^f\left(\frac{\lambda}{T},y\right)dy\frac{d\lambda}{T}
\end{align}
after a change of variables, where we used the explicit expression
for the kernel of $R_0(\lambda)$, which is just Green's function
for the Laplacian on the whole space. Note that by Lemma
\ref{op_bdd} a), moving the contour is permitted.

By Lemma \ref{princip_eig_inv_asym} and \eqref{betaTasym}, we have
\begin{equation}
\lambda_0(\beta(T))=\frac{1}{\gamma^2\beta_{\rm cr}^4}
(\beta(T)-\beta_{\rm cr})^2+o((\beta(T)-\beta_{\rm cr})^2)=
\frac{\chi^2}{\gamma_1^2\beta_{\rm cr}^4}\frac{1}{T}+o\left(\frac{1}{T}\right),
\end{equation}
and we get
$T\lambda_0(\beta(T))\to\frac{\chi^2}{\gamma_1\beta_{\rm
cr}^4}=\gamma^2/2$. Therefore, for large enough $T$, we can take
the path $\Gamma(\gamma^2/2+\delta)$ as the contour of
integration.

Let us denote the first term in \eqref{lemma1statement} by
$g_{T}^0$, and let $g_{T}^1=g_T-g_T^0$. If $u_{\beta(T)}^0$ stands
for the contour integral \eqref{cont_int} with $g_T^0$ in place of
$g_T$, then we have
\begin{align}\label{main_term_last_step}
u&_{\beta(T)}^0(t,x)= \\
\nonumber&=\frac{1}{\sqrt{T}}\frac{\alpha(f)}{2\pi i}
\int_{\Gamma\left(\frac{\gamma^2}{2}+\delta\right)}
\frac{e^{\lambda\frac{t}{T}}}{\sqrt{2\lambda}-\gamma}
\frac{1}{\int_{\mathbb{R}^3}v(w)\psi(w)dw}\int_{\mathbb{R}^3}
\frac{e^{-\sqrt{2\lambda}\frac{|x-y|}{\sqrt{T}}}}{|x-y|}v(y)\psi(y)dyd\lambda.
\end{align}

It's easy to see by Taylor's formula that for $y$ bounded,  $\epsilon\sqrt{T}\leq|x|\leq\epsilon^{-1}\sqrt{T}$, $\lambda\in\Gamma(\gamma^2/2+\delta)$, there are $C,\alpha>0$ such that for large enough $T$,
\[
\left|\frac{e^{-\sqrt{2\lambda}\frac{|x-y|}{\sqrt{T}}}}{|x-y|}-\frac{e^{-\sqrt{2\lambda}\frac{|x|}{\sqrt{T}}}}{|x|}\right|\leq C\frac{e^{-\alpha\sqrt{\lambda}}}{T}
\]
Plugging this back into \eqref{main_term_last_step}, the first
term gives the main term of \eqref{thm2result}, while the
remainder is easily shown to satisfy \eqref{thm2error} (as $v$ has
compact support, all integrals exist).

The remaining error term is
\[
u_{\beta(T)}^1(t,x)=\frac{1}{2\pi iT}
\int_{\Gamma\left(\frac{\gamma^2}{2}+
\delta\right)}e^{\lambda\frac{t}{T}}
\int_{\mathbb{R}^3}\frac{e^{-\sqrt{2\lambda}\frac{|x-y|}{\sqrt{T}}}}{2\pi|x-y|}(K(\lambda,T)f)(y)dyd\lambda.
\]
Splitting the spatial integration as
\[
u_{\beta(T)}^1(t,x)=\frac{1}{2\pi iT}
\int_{\Gamma(\frac{\gamma^2}{2}+\delta)}
\int_{\{|y|\leq\frac{\epsilon\sqrt{T}}{2}\}}+
\int_{\{|y|>\frac{\epsilon\sqrt{T}}{2}\}}=I_a+I_b,
\]
we get, after making the substitution $\xi:=\lambda t/T$,
\[
I_a=\frac{1}{2\pi i}\frac{1}{T}
\int_{\Gamma(\frac{t}{T}(\gamma^2/2+\delta))}
\int_{|y|\leq\frac{\epsilon\sqrt{T}}{2}}
e^{\xi-\sqrt{2\xi}\frac{|x-y|}{\sqrt{t}}}\frac{1}{2\pi|x-y|}(K(\lambda(\xi),T)f)(y)e^{y^2}e^{-y^2}dyd\xi.
\]
Now $1/|x-y|<2/(\epsilon\sqrt{T})$ yields
\[
|I_a|\leq\frac{||f||_{C_{\rm exp}}C(\epsilon)}{T^{3/2}}.
\]

Before we estimate $I_b$, we need to make the following observation. The contour of integration in all the previous formulas as well as in the expression for $I_b$, can be bent towards the negative real axis. Namely, by $\Gamma'(a)$, we mean a union of two rays emanating from $a$ and that make a $\pm 45$ degree angle with the negative real axis. In all the preceeding formulas, the integration can be performed on either of the corresponding contours $\Gamma$ or $\Gamma'$ since the integrands are analytic and decay along the imaginary axis and decay expononentially in the negative real direction. 

Then the change of variables $\xi:=\lambda t/T$ yields
\[
I_b=\frac{1}{2\pi i}\frac{1}{T}
\int_{\Gamma'(\frac{t}{T}(\gamma^2/2+\delta))}
\int_{|y|>\frac{\epsilon\sqrt{T}}{2}}
e^{\xi-\sqrt{2\xi}\frac{|x-y|}{\sqrt{t}}}\frac{1}{2\pi|x-y|}(K(\lambda(\xi),T)f)(y)e^{y^2}e^{-y^2}dyd\xi.
\]
Setting $x=\sqrt{T}z$ and
$y=\sqrt{T}u$, we get
\[
|I_b|\leq\frac{C_1||f||_{C_{\rm exp}}}{(2\pi)^2}
\frac{e^{-\frac{\epsilon^2}{2}T}}{T^{3/2}}
\int_{\Gamma'(\frac{t}{T}(\gamma^2/2+\delta))}
\int_{|u|>\frac{\epsilon}{2}}\frac{\left|e^{\xi-\sqrt{2\xi}C_2|z-u|}\right|}{|z-u|}e^{-(u^2-\frac{\epsilon^2}{2})T}dud\xi,
\]
from where the exponential decay of $|I_b|$ as $T\to\infty$ and thus the first claim of the Theorem follow.

It is not difficult to deduce \eqref{thm2fundam} from
\eqref{thm2result} after noting that the fundamental solution at
time $t$ is the solution with the initial data
$p_{\beta}(t,y,\delta)$ evaluated at time $t-\delta$. \qed

{\it Proof of Proposition \ref{cauchy2}}. Let
$u_T=p_{\beta(T)}-p_0$. Then
\[
\frac{\partial}{\partial t} u_T=H_{\beta(T)}u_T+\beta v p_0,
\]
 with
initial condition zero. By the Duhamel formula,
\[
u_T(t,y,x)=\int_0^t\int_{\mathbb{R}^3}p_{\beta(T)}(t-s,z,x)\beta
v(z)p_0(s,y,z)dzds,
\]
which can be written as
\begin{align*}
u_T(&t,y,x)=\\
&=\int_0^t\int_{\mathbb{R}^3}\frac{1}{\sqrt{T}}
\frac{\kappa}{2\pi i}\int_{\Gamma(\frac{\gamma^2}{2}+\delta)}
\frac{e^{\lambda\frac{t-s}{T}-\sqrt{2\lambda}
\frac{|x|}{\sqrt{T}}}}{(\sqrt{2\lambda}-\gamma)|x|}
d\lambda\psi(z)\beta(T)v(z)p_0(s,y,0)dzds+h_T(t,y,x)
\end{align*}
where $h_T(t,y,x)$ is an error term.
The first term, denoted by $u^0(t,y,x)$, can be easily seen to
equal
\begin{equation}\label{conv}
u^0_T(t,y,x)=\frac{1}{\sqrt{T}}
\frac{\beta(T)}{\beta_{\rm cr}}\int_0^tw_{T,\lambda,x}(t-s)p_0(s,y,0)ds,
\end{equation}
where
\begin{equation}\label{w}
w_{T,\lambda,x}(t)=\frac{1}{2\pi i}
\int_{\Gamma(\frac{\gamma^2}{2}+\delta)}
\frac{e^{\lambda\frac{t}{T}-\sqrt{2\lambda}
\frac{|x|}{\sqrt{T}}}}{(\sqrt{2\lambda}-\gamma)|x|}d\lambda.
\end{equation}
We can evaluate the convolution in \eqref{conv} in the following
way. First note that \eqref{w} is an inverse transform, while the
transform of $p_0(s,y,0)$ is $e^{-\sqrt{2\lambda}|y|}/2\pi|y|$,
and thus the transform of the convolution is
\[
\frac{T}{2\pi}\frac{e^{-\sqrt{2\lambda}(|x|+|y|)}}{(\sqrt{2\lambda T}-\gamma)|x||y|}.
\]
Applying the inverse formula of the Laplace transform and
substituting $\lambda\to\lambda/T$, we get the main term in
\eqref{thm2asym}.

The remainder term can be written as
\[
h_T(t,y,x)=h^{(1)}_T(t,y,x)+h^{(2)}_T(t,y,x),
\]
where
\begin{align*}
h^{(1)}_T(T,t,y,x)=\int_0^t\int_{\mathbb{R}^3}\frac{1}{\sqrt{T}}
\frac{\kappa}{2\pi i}\int_{\Gamma(\frac{\gamma^2}{2}+\delta)}
\frac{e^{\lambda\frac{t}{T}-\sqrt{2\lambda}
\frac{|x|}{\sqrt{T}}}}{(\sqrt{2\lambda}-\gamma)|x|}d\lambda\cdot\\
\cdot\psi(z)v(z)\beta(T)(p_0(s,y,z)-p_0(s,y,0))d z d s.
\end{align*}

Using the same Laplace transform trick, this can be shown to equal
\[
\frac{1}{\sqrt{T}}\frac{\beta(T)}{\beta_{\rm cr}}\frac{1}{2\pi i}
\int_{\Gamma(\frac{\gamma^2}{2}+\delta)}
\frac{e^{\lambda\frac{t}{T}}}{\sqrt{2\lambda}-\gamma}
\frac{e^{-\sqrt{2\lambda}\frac{|x|}{\sqrt{T}}}}{|x|}
\int_{\mathbb{R}^3}\left(\frac{e^{-\sqrt{2\lambda}
\frac{|y-z|}{\sqrt{T}}}}{|y-z|}-\frac{e^{-\sqrt{2\lambda}
\frac{|y|}{\sqrt{T}}}}{|y|}\right)dzd\lambda,
\]
and this can be shown to satisfy \eqref{err_term} the same way the
main term in \eqref{thm2result} followed from
\eqref{main_term_last_step}. The last remaining term $h_T^{(2)}$
containing the error term of \eqref{thm2fundam} can easily be
shown to satisfy \eqref{err_term}. \qed

\noindent{\it Proof of Proposition \ref{partition_sum}}. Applying
the Laplace transform techniques, it is not difficult to show (see
Lemma 7.1 in \cite{CKMV09}) that
\[
Z_{\beta(T),t}(x)-1=-\frac{1}{2\pi
i}\int_{\Gamma\left(\lambda_0(\beta(T))+\frac{\delta}{T}\right)}
\frac{e^{\lambda t}}{\lambda}(R_{\beta(T)}(\lambda)(\beta
v))(x)d\lambda.
\]
Using this formula, one can prove the claim following the same
steps as in the above proof of Proposition \ref{cauchy1}. \qed

The next lemma easily follows from the Feynman-Kac formula.
\begin{lemma}\label{fin_dim_distr}
For $0=t_0<t_1<...<t_n\leq T$,
\[
\mathrm{P}_{\beta,T}(x(t_1)\in dx_1,...,x(t_n)\in
dx_n)=\frac{\prod_{i=0}^{n-1}p_{\beta}
(t_{i+1}-t_i,x_i,x_{i+1})Z_{\beta,T-t_n}(x_n)}{Z_{\beta,T}}dx_1...dx_n.
\]
\end{lemma}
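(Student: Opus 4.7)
The plan is short: the statement is essentially a rewriting of the finite-dimensional distribution formula \eqref{fdd} in which the terminal integral is recognized as a shifted partition function.

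First I would start from the representation \eqref{fdd}, which gives
\[
\frac{\mathrm{P}_{\beta,T}(\omega(t_1)\in dx_1,\ldots,\omega(t_n)\in dx_n)}{dx_1\cdots dx_n} = \frac{1}{Z_{\beta,T}}\prod_{i=0}^{n-1} p_\beta(t_{i+1}-t_i, x_i, x_{i+1}) \int_{\mathbb{R}^3} p_\beta(T-t_n, x_n, y)\,dy,
\]
with $t_0=0$, $x_0=0$. The representation \eqref{fdd} itself is Feynman--Kac applied interval by interval: on each $[t_i,t_{i+1}]$ the Markov property of the Wiener measure disintegrates the expectation, the multiplicative decomposition $e^{\beta\int_0^T v(\omega(s))ds}=\prod_{i=0}^{n-1}e^{\beta\int_{t_i}^{t_{i+1}}v(\omega(s))ds}\cdot e^{\beta\int_{t_n}^{T}v(\omega(s))ds}$ factors accordingly, and the standard identification of $p_\beta$ as the fundamental solution of $\partial_t u = H_\beta u$ turns each interior factor into $p_\beta(t_{i+1}-t_i, x_i, x_{i+1})$.

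Next, I would identify the terminal factor with $Z_{\beta,T-t_n}(x_n)$. By Feynman--Kac, $p_\beta(s,x,\cdot)$ is the density of the subprobability kernel $\mathrm{E}^x_{0,s}\bigl[e^{\beta\int_0^s v(\omega(r))dr}\mathbf{1}\{\omega(s)\in\,\cdot\,\}\bigr]$, and integrating out $y$ yields
\[
\int_{\mathbb{R}^3} p_\beta(T-t_n, x_n, y)\,dy = \mathrm{E}^{x_n}_{0,T-t_n}\bigl[e^{\beta\int_0^{T-t_n} v(\omega(s))ds}\bigr] = Z_{\beta, T-t_n}(x_n),
\]
which is exactly the definition of $Z_{\beta,t}(x)$ used in Proposition \ref{partition_sum}. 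Plugging this back in gives the claimed formula.

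There is no real obstacle: the lemma is a bookkeeping consequence of \eqref{fdd} together with the identity $\int p_\beta(t,x,y)\,dy = Z_{\beta,t}(x)$, both of which are standard Feynman--Kac statements. If one preferred not to invoke \eqref{fdd} directly, the argument would simply reproduce it by conditioning on $\omega(t_1),\ldots,\omega(t_n)$, using independence of the increments of the Wiener measure to split the expectation, and applying Feynman--Kac on each of the $n+1$ resulting intervals (the final one contributing $Z_{\beta,T-t_n}(x_n)$ rather than a transition density since there is no terminal constraint).
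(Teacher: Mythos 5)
Your proposal is correct and matches the paper's approach: the paper only states that the lemma ``easily follows from the Feynman--Kac formula,'' and your argument — reading off the joint density from \eqref{fdd} and then recognizing $\int_{\mathbb{R}^3} p_\beta(T-t_n,x_n,y)\,dy = Z_{\beta,T-t_n}(x_n)$ via Feynman--Kac (using self-adjointness of $H_\beta$, hence symmetry of $p_\beta$) — is precisely the intended bookkeeping.
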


\noindent {\it Proof of Theorem \ref{main_result}}. By Lemma
\ref{fin_dim_distr}, the finite-dimensional densities of the
measure $f_{T^{-1}}\mathrm{P}_{\beta(T),T}$ for $0<t_1<...<t_n\leq
1$ and $x_1,...,x_n\in\mathbb{R}^3$ are
\begin{align*}
&\rho_{t_1,...,t_n}^T(x_1,...,x_n)=\\
&=T^{3n/2}p_{\beta(T)}(t_1T,0,x_1T^{1/2})...p_{\beta(T)}
((t_n-t_{n-1})T,x_{n-1}T^{1/2},x_nT^{1/2})
\frac{Z_{\beta(T),T(1-t_n)}(x_nT^{1/2})}{Z_{\beta(T),T}(0)}.
\end{align*}
Let's introduce
\[
p^T(s,t,y,x)=p_{\beta(T)}(T(t-s),yT^{1/2},xT^{1/2})
\]
and
\begin{align*}
R^T(s,t,y,x)=T^{3/2}\cdot\left\{\begin{array}{cc}
p_{\beta(T)}^T(s,t,y,x)\frac{Z_{\beta(T),T(1-t)}(xT^{1/2})}{Z_{\beta(T),T(1-s)}(yT^{1/2})}&t<1,\\
\frac{p_{\beta(T)}^T(s,1,y,x)}{Z_{\beta(T),T(1-s)}(yT^{1/2})}&t=1.
\end{array}\right.
\end{align*}

Then it is not hard to show that
\[
\rho_{t_1,...,t_n}^T(x_1,...,x_n)=R^T(0,t_1,0,x_1)\cdot ...\cdot R^T(t_{n-1},t_n,x_{n-1},x_n).
\]
Note that by Proposition \ref{cauchy1} and Proposition \ref{cauchy2}, we have for $x\neq 0$
\begin{equation}\label{limit1}
\lim_{T\to\infty}Tp_{\beta(T)}(Tt,0,xT^{1/2})=\frac{\kappa\psi(0)}{2\pi
i} \int_{\Gamma(\frac{\gamma^2}{2}+\delta)}\frac{e^{\lambda
t-\sqrt{2\lambda}|x|}}{(\sqrt{2\lambda}-\gamma)|x|}d\lambda,
\end{equation}
while for $x,y\neq 0$,
\begin{align}\label{limit2}
\lim_{T\to\infty}T^{3/2}p_{\beta(T)}(T&t,yT^{1/2},xT^{1/2})=\\
\nonumber&=\frac{e^{-\frac{|x-y|^2}{2t}}}{(2\pi
t)^{3/2}}+\frac{1}{4\pi^2 i}
\int_{\Gamma(\frac{\gamma^2}{2}+\delta)}\frac{e^{\lambda
t-\sqrt{2\lambda}(|x|+|y|)}}{(\sqrt{2\lambda}-\gamma)|x||y|}d\lambda.
\end{align}

By Proposition \ref{partition_sum}, for $x\neq 0$,
\begin{equation}\label{limit3}
Z_{\beta(T),Tt}(xT^{1/2})\to \overline{Z}_{\gamma,t}(x),\qquad
T\to\infty.
\end{equation} 
Using this, \eqref{limit2} and \eqref{fund_zerorange}, it follows that for $x,y\neq 0$,
\[
\lim_{T\to\infty}R^T(s,t,y,x)=R(s,t,y,x):=\overline{p}_{\gamma}(t-s,y,x)\frac{\int_{\mathbb{R}^3}\overline{p}_{\gamma}(t,1,x,z)dz}{\int_{\mathbb{R}^3}\overline{p}_{\gamma}(s,1,y,z)dz}
\]

Moreover, as follows from \eqref{limit1} and \eqref{fund_zerorange}, it is not difficult to see that if $x\neq 0$,
\[
\lim_{T\to\infty}R^T(s,t,0,x)=R(s,t,0,x):=
\lim_{y\to 0}\overline{p}_{\gamma}(t-s,y,x)\frac{\int_{\mathbb{R}^3}\overline{p}_{\gamma}
(t,1,x,z)dz}{\int_{\mathbb{R}^3}\overline{p}_{\gamma}(s,1,y,z)dz}.
\]

Since $R(s,t,y,x)$ is the transition density of the polymer under the measure $Q_{\gamma}$ (as discussed in Section 1 and as shown in \cite{CKMV10}), this implies the convergence of the finite-dimensional distributions and the result will follow once tightness is shown. On
the other hand, the proof of tightness is only a slight
modification of the proof of Lemma 10.5  in \cite{CKMV09} (where the case of fixed $\beta$ was treated), so we
don't provide it here. \qed

\end{document}